\newcommand{\be}{\begin{equation}}
\newcommand{\ee}{\end{equation}}
\newcommand{\bea}{\begin{eqnarray}}
\newcommand{\eea}{\end{eqnarray}}
\newcommand{\bee}{\begin{eqnarray*}}
\newcommand{\eee}{\end{eqnarray*}}
 \newtheorem{thm}{Theorem}[section]
 \newtheorem{lem}[thm]{Lemma}
 \theoremstyle{definition}
 \theoremstyle{remark}
 \title [Super-biderivations and linear super-commuting maps on the
Hom-Lie superalgebras]{linear super-commuting maps and Super-biderivations on the Hom-Lie superalgebras}
 \author[A. Pandey]{ Ashutosh Pandey }
\address{A. Pandey, Department of Mathematics and Statistics, Manipal University Jaipur, Jaipur, Rajasthan 303007, India.\\ Orcid Id 0000-0002-0002-3312}
\email{ashutoshpandey064@gmail.com}
 \thanks{{\it Mathematics Subject Classification 2010.} 16N60, 16W25 }
\thanks{{\it Key Words and Phrases.} Hom-Lie algebra, Super-biderivations, Centroid, commuting maps.}
\begin{document}
\maketitle
\begin{abstract}
This paper investigates the fundamental connections between linear super-commuting maps, super-biderivations, and centroids in Hom-Lie superalgebras under certain conditions. Our work generalizes the results of Bresar and Zhao on Lie algebras.
\end{abstract}
\section{Introduction}
Throughout the article, we work on a algebrically closed field $\mathbb{K}$ with characteristics different from 2. The investigation of biderivations and commuting maps originates in the field of associative ring theory, as noted in \cite{Mbre, Mbre1, ECposner}. A comprehensive overview of these maps and their applications, particularly in the context of Lie theory, was presented in \cite{Mbres}. Over the past ten years, there has been significant interest in exploring biderivations and similar maps in the setting of Lie algebras and superalgebras. Numerous studies have contributed to this growing body of research (see, for instance, \cite{Mbre2, Zchen,Xwang, Xuguo, Xita-Cxi})

The study of Hom-Lie structures is motivated by their applications in physics and the deformation theory of Lie algebras, particularly those related to Lie algebras of vector fields. Notable examples include Virasoro and $q$-deformations of Witt algebras, as constructed in \cite{Naiz, Rchar,Nhu}. The concept of a Hom-Lie algebra was first introduced in \cite{Jtha} and later extended to include quasi-Hom-Lie and quasi-Lie algebras in \cite{Dlar, Dlar1}. The construction and properties of   Chevalley-Eilenberg type homology and enveloping algebras for Hom-Lie algebras are discussed in \cite{Dya, Dya1}. deformation and cohomology theories for these algebras were addressed in \cite{Famm1, Ama}, while representation theory, particularly for multiplicative Hom-Lie algebras where the twisting map is an algebra homomorphism, is studied in \cite{Yshe}. A $q$-deformed version of the centerless W(2,2) Lie algebra, introduced as a Hom-Lie algebra in \cite{Lyau}. The introduction of Hom-Lie superalgebras, including the construction of the q-deformed Witt superalgebra, took place in \cite{Famm}. Its cohomology theory and the second cohomology of the q-deformed Witt superalgebra were investigated in \cite{Famm2, Famm3}. Additionally, Hom-Lie color algebras, introduced in \cite{Lyua1}.

The objective of this article is to explore the interactions between linear super-commuting maps, super-biderivations, and centroids in Hom-Lie superalgebras that meet specific criteria. This investigation goes beyond mere generalization, as we anticipate that linear commuting maps on Lie superalgebras will play a significant role in the development of the theory of functional identities for these structures. This expectation stems from the fact that studying linear commuting maps on Hom-Lie algebras serves as a foundational case for advancing the theory of functional identities in Hom-Lie algebras. The origin of this line of inquiry can be traced back to additive commuting maps on prime rings \cite{Naiz}, which eventually contributed to the establishment of the theory of functional identities in noncommutative rings \cite{Famm}.

\section{Notations and definitions}
In this section, for the reader’s ease, we will recall some fundamental facts about Hom-Lie superalgebras. For simplicity, the degree of an element 
$\zeta$ or a linear map $\phi$ is represented by 
$|\zeta|$ or $|\phi|$, respectively.

\textbf{Definition 2.1:} \cite{Lyuan} A Hom-Lie superalgebra is a superalgebra $(H, [\cdot, \cdot], \alpha)$, where $\alpha$ is a linear map on $H$ and $H = H_{\bar{0}} \oplus H_{\bar{1}}$, with a multiplication $[ \cdot, \cdot ]$ that satisfies the following two axioms:\\

     \textit{Skew-supersymmetry:} $[\zeta, \xi] = -(-1)^{|\zeta||\xi|} [\xi, \zeta]$,\\
     
 \textit{Hom-super Jacobi identity:} \begin{equation}\label{eq1a}
    (-1)^{|\zeta||\xi|}[\alpha(\zeta), [\xi, \omega]] + (-1)^{|\omega||\xi|}[\alpha(\omega), [\zeta, \xi]] + (-1)^{|\xi||\zeta|}[\alpha(\xi), [\omega, \zeta]] = 0\end{equation}
for all homogeneous element $\zeta, \xi, \omega \in H$. Hom-Lie superalgebra $H$ is said to be multiplicative if $\alpha([\zeta, \xi])=[\alpha(\zeta),\alpha(\xi)]$ for every $\zeta, \xi \in H$.

Let \( H = H_{\bar{0}} \oplus H_{\bar{1}} \) be a superalgebra over a field \( \mathbb{K} \). A linear map \( d: H \to H \) is said to be a \textit{homogeneous linear map of degree} \( s \) (i.e., $|d|=s$) if it satisfies \( d(H_i) \subseteq H_{i+s} \) for all \( i \in \mathbb{Z}_2 \), where \( s \in \mathbb{Z}_2 \). If \( s = 0 \) then \( d \) will be termed as \textit{even linear map}. If $|d|$ occurs for a linear map $d$ then $d$ implies a homogeneous map. If $|\zeta|$ occurs for a Hom-Lie superalgebra \( H = H_{\bar{0}} \oplus H_{\bar{1}} \), then \( \zeta \) will be a homogeneous element in \( H \).

Now, for a Hom-Lie superalgebra \( H = H_{\bar{0}} \oplus H_{\bar{1}} \), a bilinear map \( \phi: H \times H \to H \) is called a \textit{homogeneous bilinear map of degree} \( s \) if it satisfies \( \phi(H_i, H_j) \subseteq H_{i+j+s} \) for all \( i, j \in \mathbb{Z}_2 \), where \( |\phi| = s \). 

\textbf{Definition 2.2:}\cite{Lyuan} Let $(H= H_{\bar{0}} \oplus H_{\bar{1}}, [.,.], \alpha)$ be a Hom-Lie superalgebra. A bilinear map \( \phi: H \times H \to H \) is referred to as a \textit{super-biderivation} of \( H \) if $\phi \alpha=\alpha \phi$ and for all \( \zeta, \xi, \omega \in H \), it satisfies:
\[
\phi(\zeta, \xi) = (-1)^{|\zeta||\xi|} \phi(\xi, \zeta),
\]
and the super-biderivation property:
\[
\phi([\zeta, \xi], \alpha(\omega)) = (-1)^{|\phi||\zeta|}[\alpha(\zeta), \phi(\xi, \omega)] + (-1)^{|\xi||\omega|} [\phi(\zeta, \omega), \alpha(\xi)].
\]
The set of all super-biderivations of degree \( s \) for \( H \) is denoted by \( \text{BDer}_s(H) \). Clearly, $$ \text{BDer}(H) = \text{BDer}_0(H) \oplus \text{BDer}_1(H).$$

\textbf{Definition 2.3:} For a Hom-Lie superalgebra \( H = H_{\bar{0}} \oplus H_{\bar{1}} \), an \textit{even linear map} \( d: H \to H \) is called a \textit{linear super-commuting map} if $d\alpha=\alpha d$ and for all \( \zeta, \xi \in H \), it satisfies:
\[
[d(\zeta), \xi] = [\zeta, d(\xi)].
\]

\section{Super-biderivations}
Throughout the proofs of Lemmas and Theorems, we will use the notation \( H \) to represent the Hom-Lie superalgebra \( (H, [\cdot, \cdot], \alpha) \), where \( \alpha \) is an automorphism of \( H \).

\begin{lem}\label{lem3.1} Let \( H \) be a Hom-Lie superalgebra and \( d: H \to H \) be a linear super-commuting map. Assume 
\[
\phi(\zeta, \xi) = [\zeta, d(\xi)], \quad \forall \zeta, \xi \in H.
\]
Then \( \phi: H \times H \to H \) is a super-biderivation.
\end{lem}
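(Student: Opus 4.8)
The plan is to verify directly that $\phi(\zeta,\xi)=[\zeta,d(\xi)]$ satisfies all three defining properties of a super-biderivation from Definition 2.2: compatibility with $\alpha$, super-symmetry, and the super-biderivation identity. First I would record that since $d$ is even, $|d|=0$, so $\phi(H_i,H_j)=[H_i,d(H_j)]\subseteq[H_i,H_j]\subseteq H_{i+j}$; hence $\phi$ is a homogeneous bilinear map of degree $0$, i.e. $|\phi|=0$. This observation is used throughout, since it collapses the sign factor $(-1)^{|\phi||\zeta|}$ appearing in the biderivation identity to $1$, and it guarantees $|d(\omega)|=|\omega|$ for every homogeneous $\omega$.

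For compatibility with $\alpha$, I would compute $\phi(\alpha(\zeta),\alpha(\xi))=[\alpha(\zeta),d(\alpha(\xi))]$, use $d\alpha=\alpha d$ to rewrite $d(\alpha(\xi))=\alpha(d(\xi))$, and then invoke the fact that $\alpha$ is an automorphism of $H$ to pull $\alpha$ outside the bracket, obtaining $\alpha([\zeta,d(\xi)])=\alpha(\phi(\zeta,\xi))$. For super-symmetry, the key is the super-commuting hypothesis: starting from $\phi(\zeta,\xi)=[\zeta,d(\xi)]$, I apply skew-supersymmetry to the bracket (using $|d(\xi)|=|\xi|$) and then the relation $[d(\xi),\zeta]=[\xi,d(\zeta)]$, which returns $\phi(\xi,\zeta)=[\xi,d(\zeta)]$ weighted by the sign dictated by skew-supersymmetry, yielding the super-symmetry relation of Definition 2.2. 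Both of these are short substitutions with no difficulty beyond tracking parities.

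The substantive step is the super-biderivation identity. After substituting the definition of $\phi$ and using $d\alpha=\alpha d$ to write the left-hand side as $[[\zeta,\xi],\alpha(\omega')]$ with $\omega':=d(\omega)$ and $|\omega'|=|\omega|$, the claim reduces to an identity among iterated brackets of $\zeta$, $\xi$ and the single free element $\omega'$, namely the expansion of $[[\zeta,\xi],\alpha(\omega')]$ into $[\alpha(\zeta),[\xi,\omega']]$ and $[[\zeta,\omega'],\alpha(\xi)]$. The crucial point is that $d$ enters only through $\omega'$, so this is exactly a consequence of the Hom-super Jacobi identity \eqref{eq1a} applied to the triple $(\zeta,\xi,\omega')$. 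Concretely, I would isolate the term $[\alpha(\omega'),[\zeta,\xi]]$ from \eqref{eq1a}, rewrite it via skew-supersymmetry as a scalar multiple of $[[\zeta,\xi],\alpha(\omega')]$, and then rearrange the two surviving terms to match the prescribed right-hand side.

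The main obstacle is the sign bookkeeping in this last step: each application of skew-supersymmetry contributes a factor $(-1)^{|\cdot||\cdot|}$, and one must check that all these factors combine modulo $2$ to produce precisely the coefficients $1$ and $(-1)^{|\xi||\omega|}$ demanded by the identity. This is where the even degree of $d$, through $|\omega'|=|\omega|$, is essential, and it is the only place where careful term-by-term tracking of parities is required; the remainder of the argument is formal substitution into the definitions.
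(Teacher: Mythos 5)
Your proposal follows essentially the same route as the paper: record $|\phi|=|d|=\bar 0$, verify $\alpha$-compatibility from $d\alpha=\alpha d$ together with the multiplicativity of $\alpha$, obtain the symmetry property by combining skew-supersymmetry with the super-commuting hypothesis, and reduce the biderivation identity to the Hom-super Jacobi identity applied to the triple $(\zeta,\xi,d(\omega))$, exactly as in the paper's proof. The one point to watch is the sign in the symmetry step: a literal application of skew-supersymmetry gives $\phi(\xi,\zeta)=-(-1)^{|\xi||\zeta|}\phi(\zeta,\xi)$ rather than the relation stated in Definition 2.2, but this discrepancy is already present in the paper's own computation and is not introduced by your argument.
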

\begin{proof} Since $d$ is even and $\phi(\zeta, \xi) = [\zeta, d(\xi)]$, thus we get $ |\phi|=|d|=\bar{0}$. 
Again, 
   \begin{align*}
(-1)^{|\xi| |\zeta|}\phi(\xi, \zeta) &= (-1)^{|\xi| |\zeta|} [\xi, d(\zeta)] \\
&= (-1)^{|\xi| |\zeta|+|\xi|(|d|+|\zeta|)}  [d(\zeta), \xi]  \\
&= [d(\zeta), \xi] \\
&= [\zeta, d(\xi)] \\
&= \phi(\zeta, \xi).
\end{align*}
Again, \begin{align*}
\phi(\alpha(\xi), \alpha(\zeta)) &= [\alpha(\xi), d(\alpha(\zeta))] \\
&= [\alpha(\xi), \alpha(d(\zeta))] \\
&= \alpha([\xi, d(\zeta)]) \\
&= \alpha(\phi(\xi, \zeta)).
\end{align*}
Now, from hypothesis and using $d\alpha=\alpha d$, we have:
\begin{align*}
 \phi([\zeta, \xi],\alpha(\chi))=\big[[\zeta, \xi],d(\alpha(\chi))\big]= \big[[\zeta, \xi],\alpha(d(\chi))\big].
\end{align*} By using the super Hom-Jacobi identity, we obtain:
\begin{align*}
    \big[[\zeta, \xi],\alpha(d(\chi))\big] &= (-1)^{|\xi||\chi|}\big[[\zeta,d(\chi)],\alpha(\xi)\big] + \big[\alpha(\zeta), [\xi, d(\chi)]\big] \\
&= (-1)^{|\chi||\xi|}\big[[\zeta,d(\chi)],\alpha(\xi)\big] + (-1)^{|\phi||\zeta|}\big[\alpha(\zeta), [\xi, d(\chi)]\big] \\
&= (-1)^{|\chi||\xi|}\big[\phi(\zeta,\chi),\alpha(\xi)\big] + (-1)^{|\phi||\zeta|} \big[\alpha(\zeta), \phi(\xi, \chi)\big].
\end{align*}

Thus, $$\phi([\zeta, \xi],\alpha(\chi))= (-1)^{|\chi||\xi|}\big[\phi(\zeta,\chi),\alpha(\xi)\big] + (-1)^{|\phi||\zeta|} \big[\alpha(\zeta), \phi(\xi, \chi)\big].$$
It follows that $\phi$ is a super-biderivation.
\end{proof}
\textbf{Definition 3.2.} Let \( H \) be a Hom-Lie superalgebra, and let \( \delta: H \to H \) be a linear map. The centroid of \( H \) is defined as:
\[
C(H) = \left\{ \delta : H \to H \mid \delta([\zeta, \xi]) = (-1)^{|\delta||\zeta|} [\alpha(\zeta), \delta(\xi)], \text{and} \ \alpha \delta=\delta \alpha   \, \forall \zeta, \xi \in H \right\}.
\]
Denote by \( C_s(H) \) the set of all elements of degree \( s \) in \( C(H) \). Clearly, we have:
\[
C(H) = C_{\bar{0}}(H) \oplus C_{\bar{1}}(H).
\]
\begin{lem}\label{Lemma 3.3.} Let \( H \) be a Hom-Lie superalgebra. Suppose \( \delta: H \to H \) is a linear map, and \( \phi: H \times H \to H \) is a bilinear map. Define \( \phi(\zeta, \xi) = \alpha^{-1}\delta([\zeta, \xi]) \). If \( \delta \in C(H) \), then \( \phi \) is a super-biderivation.
\end{lem}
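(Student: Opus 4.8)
The plan is to check directly that $\phi(\zeta,\xi)=\alpha^{-1}\delta([\zeta,\xi])$ satisfies every clause of Definition 2.2. Two of these are quick. Since $\alpha$ (and hence $\alpha^{-1}$) is an even automorphism and the bracket respects the $\mathbb{Z}_2$-grading, one reads off $|\phi|=|\delta|$; this is the identification that makes the factor $(-1)^{|\phi||\zeta|}$ in the target identity agree with $(-1)^{|\delta||\zeta|}$ at the end. Super-symmetry $\phi(\zeta,\xi)=(-1)^{|\zeta||\xi|}\phi(\xi,\zeta)$ follows by applying the linear map $\alpha^{-1}\delta$ to the super-antisymmetry of $[\cdot,\cdot]$, exactly as in the proof of Lemma \ref{lem3.1}. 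For $\phi\alpha=\alpha\phi$ I would use that $\alpha$ is an automorphism, so $[\alpha(\zeta),\alpha(\xi)]=\alpha([\zeta,\xi])$, together with $\delta\alpha=\alpha\delta$, giving $\phi(\alpha(\zeta),\alpha(\xi))=\alpha^{-1}\delta\alpha([\zeta,\xi])=\alpha\big(\alpha^{-1}\delta([\zeta,\xi])\big)=\alpha(\phi(\zeta,\xi))$.

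The substantive clause is the super-biderivation identity, and the decisive preparatory move is to rewrite $\phi$ through the centroid relation. Because $\alpha^{-1}$ is an automorphism, $\phi(\zeta,\xi)=\alpha^{-1}\delta([\zeta,\xi])=\alpha^{-1}\big((-1)^{|\delta||\zeta|}[\alpha(\zeta),\delta(\xi)]\big)=(-1)^{|\delta||\zeta|}[\zeta,\alpha^{-1}\delta(\xi)]$. Starting from $\phi([\zeta,\xi],\alpha(\omega))=\alpha^{-1}\delta\big([[\zeta,\xi],\alpha(\omega)]\big)$, I would apply the centroid property with first argument $[\zeta,\xi]$, use $\delta\alpha=\alpha\delta$ and the automorphism property of $\alpha,\alpha^{-1}$, and collapse the result to $(-1)^{|\delta|(|\zeta|+|\xi|)}[[\zeta,\xi],\delta(\omega)]$. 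Writing $\delta(\omega)=\alpha(\alpha^{-1}\delta(\omega))$ and invoking the Hom-super Jacobi identity in the rearranged form already used in Lemma \ref{lem3.1}, now with third entry $\alpha^{-1}\delta(\omega)$ of degree $|\delta|+|\omega|$, splits this into $(-1)^{|\xi|(|\delta|+|\omega|)}[[\zeta,\alpha^{-1}\delta(\omega)],\alpha(\xi)]+[\alpha(\zeta),[\xi,\alpha^{-1}\delta(\omega)]]$. Finally I would replace $[\zeta,\alpha^{-1}\delta(\omega)]$ and $[\xi,\alpha^{-1}\delta(\omega)]$ by $(-1)^{|\delta||\zeta|}\phi(\zeta,\omega)$ and $(-1)^{|\delta||\xi|}\phi(\xi,\omega)$ using the reformulation above.

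The main obstacle --- indeed essentially the whole content --- is the sign bookkeeping in this last step. The prefactor $(-1)^{|\delta|(|\zeta|+|\xi|)}$, the single Jacobi sign $(-1)^{|\xi|(|\delta|+|\omega|)}$, and the two conversion signs $(-1)^{|\delta||\zeta|},(-1)^{|\delta||\xi|}$ must combine so that the contributions carrying $(-1)^{2|\delta||\zeta|}$ and $(-1)^{2|\delta||\xi|}$ collapse to $1$, leaving exactly $(-1)^{|\phi||\zeta|}[\alpha(\zeta),\phi(\xi,\omega)]+(-1)^{|\xi||\omega|}[\phi(\zeta,\omega),\alpha(\xi)]$ after substituting $|\phi|=|\delta|$. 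I would also be careful to fix one convention for the super-antisymmetry of the bracket and keep it throughout (matching the computation in Lemma \ref{lem3.1}), so that both the super-symmetry of $\phi$ and the final identity acquire precisely the signs displayed in Definition 2.2.
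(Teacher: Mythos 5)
Your proposal is correct; I verified the sign bookkeeping in the final step and the exponents do collapse as you claim: the first term carries $2|\delta||\zeta|+2|\delta||\xi|+|\xi||\omega|\equiv|\xi||\omega|$ and the second carries $|\delta||\zeta|+2|\delta||\xi|\equiv|\phi||\zeta|$, which is exactly Definition 2.2. The easy clauses ($|\phi|=|\delta|$, super-symmetry, $\phi\alpha=\alpha\phi$) are handled the same way as in the paper. For the super-biderivation identity, however, you reverse the order of the two key moves. The paper first expands $[[\zeta,\xi],\alpha(\chi)]$ by the Hom-super Jacobi identity and then applies the centroid relation separately to each of the two resulting terms, finishing with a skew-supersymmetry flip and the automorphism property of $\alpha^{-1}$. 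You instead apply the centroid relation once, with first argument $[\zeta,\xi]$, to get $(-1)^{|\delta|(|\zeta|+|\xi|)}[[\zeta,\xi],\delta(\omega)]$, and only then invoke the Jacobi identity with third entry $\alpha^{-1}\delta(\omega)$ of degree $|\delta|+|\omega|$; your reformulation $\phi(\zeta,\xi)=(-1)^{|\delta||\zeta|}[\zeta,\alpha^{-1}\delta(\xi)]$ then converts the pieces back. The net effect is that your route reduces the hard clause to essentially the computation of Lemma \ref{lem3.1} with $d$ replaced by the (possibly odd) map $\alpha^{-1}\delta$, at the cost of tracking the shifted degree in the Jacobi sign, whereas the paper's route avoids that shift but must apply the centroid relation twice and undo an $\alpha^2$ at the end. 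Both are sound direct verifications; neither requires anything beyond the centroid axiom and the Hom-super Jacobi identity.
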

\begin{proof}
From definition 3.2, we have
\begin{align*}
    \phi(\zeta, \xi)=\delta([\zeta, \xi])=(-1)^{|\delta||\xi|}[\alpha(\zeta), \delta(\xi)]
\end{align*} for all $\zeta, \xi \in H$. Thus, $|\phi|=|\delta|$.
First, we get:
\begin{align*}
(-1)^{|\xi||\zeta|} \phi(\xi, \zeta) &= (-1)^{|\xi||\zeta|} \delta([\xi, \zeta]) \\
&= (-1)^{|\xi||\zeta|} \delta((-1)^{|\xi||\zeta|} [\zeta, \xi]) \\
&= \phi(\zeta, \xi), \quad \forall \, \zeta, \xi \in H.
\end{align*}
Also, \begin{align*}
\phi(\alpha(\zeta), \alpha(\xi)) &= \delta([\alpha(\zeta), \alpha(\xi)]) \\
&= \delta \alpha([\zeta, \xi]) \\
&= \alpha(\delta([\zeta, \xi])) \\
&= \alpha(\phi(\zeta, \xi)).
\end{align*}
Second by super Hom-Jacobi identity, we get
\begin{align*}
\phi([\zeta, \xi], \alpha(\chi)) &= \alpha^{-1}\delta([[ \zeta, \xi ], \alpha(\chi)]) \\
&= \alpha^{-1}\delta([\alpha(\zeta), [\xi, \chi]] + (-1)^{|\zeta||\xi|} [\alpha(\xi), [\zeta, \chi]]) \\
&= \alpha^{-1}\delta([\alpha(\zeta), [\xi, \chi]]) + (-1)^{|\zeta||\xi|} \alpha^{-1}\delta([\alpha(\xi), [\zeta, \chi]]) \\
&= (-1)^{|\zeta||\delta|} \alpha^{-1}[\alpha^2(\zeta), \delta([\xi, \chi])] + (-1)^{|\zeta||\xi| + |\xi||\delta|} \alpha^{-1}[\alpha^2(\xi), \delta([\zeta, \chi])] \\
&= (-1)^{|\zeta||\delta|} \alpha^{-1}[\alpha^2(\zeta), \delta([\xi, \chi])] \\
&\quad + (-1)^{|\zeta||\xi| + |\xi|(|\delta| + |\zeta| + |\chi|)} \alpha^{-1}[\delta([\zeta, \chi]), \alpha^2(\xi)] \\
&= (-1)^{|\zeta||\delta|} [\alpha(\zeta), \alpha^{-1}\delta([\xi, \chi])] + (-1)^{|\xi||\chi|} [\alpha^{-1}\delta([\zeta, \chi]), \alpha(\xi)] \\
&= (-1)^{|\zeta||\delta|} [\alpha(\zeta), \phi(\xi, \chi)] + (-1)^{|\xi||\chi|} [\phi(\zeta, \chi), \alpha(\xi)], \quad \forall \zeta, \xi, \chi \in H.
\end{align*}
Thus, $\phi$ is a super-biderivation.
\end{proof}
\begin{lem}\label{Lemma 3.4} Let \( \phi \) be a super-biderivation on Hom-Lie superalgebra \( H \), then
\begin{align*}
[\phi(\zeta, \xi), [\pi, \tau]] &= (-1)^{|\phi|(|\zeta|+|\xi|)} [[\zeta, \xi], \phi(\pi, \tau)]
\end{align*}
for any homogeneous \( \zeta, \xi, \pi, \tau \in H \).
\end{lem}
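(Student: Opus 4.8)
The plan is to obtain the identity by evaluating the biderivation $\phi$ on the double bracket $\phi([\zeta,\xi],[\pi,\tau])$ in two different ways and then reorganizing the outcome with the Hom-super Jacobi identity, exactly in the spirit of the proofs of Lemma~\ref{lem3.1} and Lemma~\ref{Lemma 3.3.}. Since $\alpha$ is an automorphism and $H$ is multiplicative, I would first write $[\pi,\tau]=\alpha(\omega)$ with $\omega=[\alpha^{-1}(\pi),\alpha^{-1}(\tau)]$, so that the second argument has the form $\alpha(\cdot)$ required by the super-biderivation property. Applying that property in the first slot expresses $\phi([\zeta,\xi],[\pi,\tau])$ as $(-1)^{|\phi||\zeta|}[\alpha(\zeta),\phi(\xi,\omega)]+(-1)^{|\xi||\omega|}[\phi(\zeta,\omega),\alpha(\xi)]$, while the super-symmetry $\phi(\zeta,\xi)=(-1)^{|\zeta||\xi|}\phi(\xi,\zeta)$ yields the companion expansion obtained by differentiating the second slot.

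Next I would expand $\phi([\zeta,\xi],[\pi,\tau])$ by differentiating the first bracket and then the second, and independently by differentiating the second bracket and then the first; equating the two fully expanded results produces a relation among the resulting ``mixed'' terms. I would then feed in the Hom-super Jacobi identity~\eqref{eq1a}, together with the skew-supersymmetry of $[\cdot,\cdot]$, to rewrite the nested brackets, tracking the Koszul signs and the $\alpha$-twists that appear each time the property is invoked. The aim is that after this reorganization the terms carrying a split argument such as $\phi(\xi,\omega)$ cancel in pairs, so as to leave exactly $[\phi(\zeta,\xi),[\pi,\tau]]$ on one side and $(-1)^{|\phi|(|\zeta|+|\xi|)}[[\zeta,\xi],\phi(\pi,\tau)]$ on the other.

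The main obstacle I anticipate is precisely this final reconciliation. The super-biderivation property only acts on arguments of the form $\phi([\cdot,\cdot],\alpha(\cdot))$, and each application separates the pair $(\zeta,\xi)$ (and likewise $(\pi,\tau)$) into single-variable evaluations $\phi(\zeta,\omega),\phi(\xi,\omega)$, so the target quantities $\phi(\zeta,\xi)$ and $\phi(\pi,\tau)$ do not appear on the nose in the raw expansions; recovering them forces one to run the property backwards on a carefully chosen grouping of the mixed terms. It is here that the Hom-super Jacobi identity and the hypothesis that $\alpha$ is an automorphism (so that $\alpha^{\pm1}$ may be passed freely through brackets and through $\phi$ via $\phi\alpha=\alpha\phi$) do the real work. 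I would therefore budget most of the effort for the sign- and twist-bookkeeping in this recombination step, and would sanity-check the final signs by specializing to the purely even case, where \eqref{eq1a} reduces to the ordinary Hom-Jacobi identity and the factor $(-1)^{|\phi|(|\zeta|+|\xi|)}$ becomes trivial.
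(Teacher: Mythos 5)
There is a genuine gap, in two places. First, the double bracket you propose to expand is the wrong one. If you expand $\phi([\zeta,\xi],[\pi,\tau])$, every application of the super-biderivation property splits off a single variable against the \emph{other} pair, so you only ever produce terms of the shape $\phi(\zeta,\pi)$, $\phi(\zeta,\tau)$, $\phi(\xi,\pi)$, $\phi(\xi,\tau)$ (suitably $\alpha$-twisted); the quantities $\phi(\zeta,\xi)$ and $\phi(\pi,\tau)$ that you need never appear, and there is no legitimate way to ``run the property backwards'' to manufacture them. The paper instead expands the \emph{crossed} bracket $\phi([\alpha(\zeta),\alpha(\pi)],[\alpha(\xi),\alpha(\tau)])$ in two orders; with that pairing the expansions produce exactly the nested brackets involving $\phi(\zeta,\xi)$, $\phi(\pi,\tau)$, $\phi(\zeta,\tau)$, $\phi(\pi,\xi)$ on the nose (the $\alpha$'s on the inputs are inserted precisely so that every second argument fed to the biderivation identity has the required form $\alpha(\cdot)$, which is the point you correctly flag but resolve with the wrong bracket).

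Second, and more seriously, your expectation that after reorganization ``the mixed terms cancel in pairs'' leaving the identity is not what happens, even in the purely even Lie-algebra case of Bre\v{s}ar--Zhao. Comparing the two expansions and applying the Hom-super Jacobi identity does not kill the defect
\[
\Psi(\zeta,\xi;\pi,\tau)=\big[\phi(\alpha(\zeta),\alpha(\xi)),[\alpha(\pi),\alpha(\tau)]\big]-(-1)^{|\phi|(|\zeta|+|\xi|)}\big[[\alpha(\zeta),\alpha(\xi)],\phi(\alpha(\pi),\alpha(\tau))\big];
\]
it only yields the permutation relation $\Psi(\zeta,\xi;\pi,\tau)=(-1)^{|\pi||\tau|+|\tau||\xi|+|\xi||\pi|}\Psi(\zeta,\tau;\pi,\xi)$. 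The essential remaining idea, absent from your plan, is to combine this relation with the skew-supersymmetry of $\Psi$ in its last two slots along two different chains of permutations, obtaining both $\Psi(\zeta,\xi;\pi,\tau)=(-1)^{|\xi||\pi|}\Psi(\zeta,\pi;\xi,\tau)$ and $\Psi(\zeta,\xi;\pi,\tau)=-(-1)^{|\xi||\pi|}\Psi(\zeta,\pi;\xi,\tau)$, whence $2\Psi=0$ and finally $\Psi=0$ because the characteristic is not $2$. Your proposal never invokes the characteristic hypothesis, which is indispensable here; without this symmetry-iteration step the argument does not close. (The final passage from the $\alpha$-twisted identity to the untwisted one, using that $\alpha$ is an automorphism, is the only part that really is routine.)
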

\begin{proof}
Firstly, we evaluate $\phi([\alpha(\zeta), \alpha(\pi)], [\alpha(\xi), \alpha(\tau)])$ in two different ways using the definition of super-biderivation.
On the one hand, one has
\begin{align*}
\phi([\alpha([\zeta, \pi]), [\alpha(\xi), \alpha(\tau)]])
&= [\phi([\zeta, \pi], \alpha(\xi)), \alpha^2(\tau)] 
+ (-1)^{|\xi|(|\phi| + |\zeta| + |\pi|)} [\alpha^2(\xi), \phi([\zeta, \pi], \alpha(\tau))] \\
&= (-1)^{|\phi||\zeta|} [[\alpha(\zeta), \phi(\pi, \xi)], \alpha^2(\tau)] 
+ (-1)^{|\pi||\xi|} [[\phi(\zeta, \xi), \alpha(\pi)], \alpha^2(\tau)] \\
&\quad + (-1)^{|\xi|(|\phi| + |\zeta| + |\pi|)} \left( (-1)^{|\phi||\zeta|} [\alpha^2(\xi), [\alpha(\zeta), \phi(\pi, \tau)]] \right. \\
&\quad \left. + (-1)^{|\pi||\tau|} [\alpha^2(\xi), [\phi(\zeta, \tau), \alpha(\pi)]] \right).
\end{align*}
On the other hand, we have
\begin{align*}
\phi([\alpha(\zeta), \alpha(\pi)], \alpha([\xi, \tau])) &= (-1)^{|\phi||\zeta|} [\alpha^2(\zeta), \phi(\alpha(\pi), [\xi, \tau])] + (-1)^{|\pi|(|\xi|+|\tau|)} [\phi(\alpha(\zeta), [\xi, \tau]), \alpha^2(\pi)] \\
&= (-1)^{|\phi||\zeta|} [\alpha^2(\zeta), [\phi(\pi, \xi), \alpha(\tau)]] + (-1)^{(|\phi|+|\pi|)|\xi|} [\alpha^2(\zeta), [\alpha(\xi), \phi(\pi, \tau)]] \\
&\quad + (-1)^{|\pi|(|\xi|+|\tau|)} [[\phi(\zeta, \xi), \alpha(\tau)], \alpha^2(\pi)] + (-1)^{(|\phi|+|\zeta|)|\xi|} [[\alpha(\xi), \phi(\zeta, \tau)], \alpha^2(\pi)].
\end{align*}
Comparing the above two equations and using the super Hom-Lie Jacobi identity, we get:
\begin{align}\label{eqpg1}
    &\big[\phi(\alpha(\zeta), \alpha(\xi)), [\alpha(\pi), \alpha(\tau)]\big] 
    - (-1)^{|\phi|(|\zeta|+|\xi|)} \big[[\alpha(\zeta), \alpha(\xi)], \phi(\alpha(\pi), \alpha(\tau))\big] \nonumber \\
    &= (-1)^{|\pi||\tau| + |\tau||\xi| + |\xi||\pi|} 
    \bigg(\big[\phi(\alpha(\zeta), \alpha(\tau)), [\alpha(\xi), \alpha(\pi)]\big] \nonumber \\
    &\quad - (-1)^{|\phi|(|\zeta|+|\tau|)} 
    \big[[\alpha(\zeta), \alpha(\tau)], \phi(\alpha(\xi), \alpha(\pi))\big]\bigg).
\end{align}
Now let 
\[
\Psi(\zeta, \xi; \pi, \tau) = \big[\phi(\alpha(\zeta), \alpha(\xi)), [\alpha(\pi), \alpha(\tau)]\big] 
    - (-1)^{|\phi|(|\zeta|+|\xi|)} \big[[\alpha(\zeta), \alpha(\xi)], \phi(\alpha(\pi), \alpha(\tau))\big].
\]
According to the equality (\ref{eqpg1}), we get that
\[
\Psi(\zeta, \xi; \pi, \tau) = (-1)^{|\pi||\tau| + |\tau||\xi| + |\xi||\pi|} \Psi(\zeta, \tau; \pi, \xi).
\]
For one thing, we get
\begin{align*}
\Psi(\zeta, \xi; \pi, \tau) &= -( - 1)^{|\pi||\tau|} \Psi(\zeta, \xi; \tau, \pi) \\
&= -( - 1)^{|\pi||\tau|} ( - 1)^{|\pi||\tau| + |\tau||\xi| + |\xi||\pi|} \Psi(\zeta, \pi; \tau, \xi) \\
&= ( - 1)^{|\xi||\pi|} \Psi(\zeta, \pi; \xi, \tau).
\end{align*}
For another case, we also have:
\begin{align*}
\Psi(\zeta, \xi; \pi, \tau) &= ( - 1)^{|\pi||\tau| + |\tau||\xi| + |\xi||\pi|} \Psi(\zeta, \tau; \pi, \xi) \\
&= -( - 1)^{|\pi||\tau| + |\tau||\xi| + |\xi||\pi|} ( - 1)^{|\xi||\pi|} \Psi(\zeta, \tau; \xi, \pi) \\
&= -( - 1)^{|\xi||\pi|} ( - 1)^{2(|\pi||\tau| + |\tau||\xi| + |\xi||\pi|)} \Psi(\zeta, \pi; \xi, \tau) \\
&= -( - 1)^{|\xi||\pi|} \Psi(\zeta, \pi; \xi, \tau).
\end{align*}
Thus, it implies that:
\[
\Psi(\zeta, \xi; \pi, \tau) = -\Psi(\zeta, \xi; \pi, \tau).
\]
Since the base field is different from characteristics 2, this forces:
\[
\Psi(\zeta, \xi; \pi, \tau) = 0,
\]
i.e.
\[
[\phi(\alpha(\zeta), \alpha(\xi)), [\alpha(\pi), \alpha(\tau)]] = ( - 1)^{|\phi|(|\zeta| + |\xi|)} [[\alpha(\zeta), \alpha(\xi)], \phi(\alpha(\pi), \alpha(\tau))]
\]
which implies that $$[\phi(\zeta, \xi), [\pi, \tau]] = ( - 1)^{|\phi|(|\zeta| + |\xi|)} [[\zeta, \xi], \phi(\pi, \tau)].$$
\end{proof}
\begin{lem}\label{Lemma 3.5:} If $|\zeta| + |\xi| = \bar{0}$, then $[\phi(\zeta, \xi), [\zeta, \xi]] = 0$ for any homogeneous $\zeta, \xi \in H$.
\end{lem}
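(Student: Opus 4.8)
The plan is to specialize Lemma~\ref{Lemma 3.4} by choosing $\pi = \zeta$ and $\tau = \xi$, and then to exploit the hypothesis $|\zeta| + |\xi| = \bar{0}$ together with skew-supersymmetry of the bracket to force the element $[\phi(\zeta,\xi),[\zeta,\xi]]$ to coincide with its own negative. Since the base field has characteristic different from $2$, this will immediately give the vanishing.

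Concretely, I would first substitute $\pi = \zeta$ and $\tau = \xi$ into the biderivation identity of Lemma~\ref{Lemma 3.4} to obtain
\[
[\phi(\zeta,\xi),[\zeta,\xi]] = (-1)^{|\phi|(|\zeta|+|\xi|)}\,[[\zeta,\xi],\phi(\zeta,\xi)].
\]
Because $|\zeta|+|\xi| = \bar{0}$, the prefactor $(-1)^{|\phi|(|\zeta|+|\xi|)}$ collapses to $1$, leaving
\[
[\phi(\zeta,\xi),[\zeta,\xi]] = [[\zeta,\xi],\phi(\zeta,\xi)].
\]
Next I would apply skew-supersymmetry to the right-hand bracket. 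The key bookkeeping is to record the two relevant degrees: $|[\zeta,\xi]| = |\zeta|+|\xi| = \bar{0}$ by hypothesis, while $|\phi(\zeta,\xi)| = |\phi| + |\zeta| + |\xi| = |\phi|$ since $\phi$ is homogeneous of degree $|\phi|$ and again $|\zeta|+|\xi|=\bar{0}$. Hence the sign incurred by swapping the two arguments is $(-1)^{|[\zeta,\xi]|\,|\phi(\zeta,\xi)|} = (-1)^{\bar{0}\cdot|\phi|} = 1$, so skew-supersymmetry yields
\[
[[\zeta,\xi],\phi(\zeta,\xi)] = -[\phi(\zeta,\xi),[\zeta,\xi]].
\]
Combining the two displays gives $[\phi(\zeta,\xi),[\zeta,\xi]] = -[\phi(\zeta,\xi),[\zeta,\xi]]$, that is $2[\phi(\zeta,\xi),[\zeta,\xi]] = 0$, and since $\operatorname{char}\mathbb{K}\neq 2$ this forces $[\phi(\zeta,\xi),[\zeta,\xi]] = 0$.

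I do not anticipate a genuine obstacle, as the statement is a short specialization of the preceding lemma. The only point demanding care is the sign bookkeeping: one must verify that both the prefactor coming from Lemma~\ref{Lemma 3.4} and the sign arising from skew-supersymmetry become trivial precisely because $|\zeta|+|\xi|=\bar{0}$. If either degree were mistracked, the two signs would fail to cancel and the collapse ``$x=-x$'' would not occur, so this is the step I would double-check before concluding.
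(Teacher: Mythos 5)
Your proposal is correct and follows essentially the same route as the paper: substitute $\pi=\zeta$, $\tau=\xi$ into Lemma~\ref{Lemma 3.4}, apply skew-supersymmetry, and use $\operatorname{char}\mathbb{K}\neq 2$; the only cosmetic difference is that you invoke the hypothesis $|\zeta|+|\xi|=\bar{0}$ up front to trivialize the signs, whereas the paper carries the general sign exponents along and simplifies at the end.
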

\begin{proof}
Substituting $\pi = \zeta$ and $\tau = \xi$ in Lemma (\ref{Lemma 3.4}), we obtain
\begin{align*}
    [\phi(\zeta, \xi), [\zeta, \xi]] &= (-1)^{|\phi|(|\zeta|+|\xi|)} [[\zeta, \xi], \phi(\zeta, \xi)] \\
    &= -(-1)^{2|\phi|(|\zeta|+|\xi|) + 2|\zeta||\xi| + |\zeta|^2 + |\xi|^2} [\phi(\zeta, \xi), [\zeta, \xi]] \\
    &= -(-1)^{|\zeta|^2 + |\xi|^2} [\phi(\zeta, \xi), [\zeta, \xi]] \\
    &= -(-1)^{|\zeta| + |\xi|} [\phi(\zeta, \xi), [\zeta, \xi]].
\end{align*}
In view of $|\zeta| + |\xi| = 0$, then we have
\[
[\phi(\zeta, \xi), [\zeta, \xi]] = -[\phi(\zeta, \xi), [\zeta, \xi]].
\]
Therefore, it follows that
\[
[\phi(\zeta, \xi), [\zeta, \xi]] = 0.
\] 
\end{proof}
\begin{lem}\label{Lemma 3.6:} Let $\phi$ be a super-biderivation on $H$. If $[\zeta, \xi] = 0$, then $\phi(\zeta, \xi) \in Z([H, H])$, where $Z([H, H])$ is the center of $[H, H]$.
\end{lem}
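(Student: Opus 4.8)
The plan is to read off the conclusion as an immediate specialization of Lemma \ref{Lemma 3.4}. Recall that $Z([H,H])$ consists of those elements of $[H,H]$ that super-commute with every element of $[H,H]$; since $[H,H]$ is spanned by brackets $[\pi,\tau]$ with homogeneous $\pi,\tau \in H$, it suffices to test super-commutation against such generators. I would therefore fix homogeneous $\pi,\tau \in H$ and apply Lemma \ref{Lemma 3.4} to the given pair $\zeta,\xi$, obtaining
\[
[\phi(\zeta, \xi), [\pi, \tau]] = (-1)^{|\phi|(|\zeta|+|\xi|)} [[\zeta, \xi], \phi(\pi, \tau)].
\]

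Next I would invoke the hypothesis $[\zeta,\xi]=0$. This forces the inner bracket on the right-hand side to vanish, so $[[\zeta,\xi],\phi(\pi,\tau)] = [0,\phi(\pi,\tau)] = 0$, and hence $[\phi(\zeta,\xi),[\pi,\tau]] = 0$ for every homogeneous pair $\pi,\tau \in H$. By bilinearity of $[\cdot,\cdot]$ this extends to $[\phi(\zeta,\xi),u]=0$ for all $u \in [H,H]$, so $\phi(\zeta,\xi)$ super-commutes with the entire derived superalgebra.

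The only point that genuinely needs care—and the only place I expect any friction—is the membership claim $\phi(\zeta,\xi)\in[H,H]$ demanded by the literal definition of the center. The computation above establishes only that $\phi(\zeta,\xi)$ lies in the centralizer of $[H,H]$ inside $H$; to place it inside $[H,H]$ itself one either adopts the common convention that identifies $Z([H,H])$ with this centralizer, or else appeals to a structural feature of $H$ ensuring $\phi$ is valued in $[H,H]$. Setting aside this interpretive step, the proof reduces to a one-line substitution into Lemma \ref{Lemma 3.4}, and I would present it in exactly that compact form.
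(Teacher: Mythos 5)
Your proposal is correct and matches the paper's own proof essentially verbatim: both substitute the hypothesis $[\zeta,\xi]=0$ into Lemma \ref{Lemma 3.4} to get $[\phi(\zeta,\xi),[\pi,\tau]]=0$ for all $\pi,\tau\in H$ and conclude that $\phi(\zeta,\xi)$ centralizes $[H,H]$. Your side remark about $Z([H,H])$ versus the centralizer is a fair observation, but the paper itself reads the conclusion as "commutes with $[H,H]$," consistent with its later definition of $Z_H(X)$ as a centralizer, so no extra argument is needed.
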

\begin{proof} If $[\zeta, \xi] = 0$, then we get
\begin{align*}
    [\phi(\zeta, \xi), [\pi, \tau]] &= (-1)^{|\phi|(|\zeta| + |\xi|)} [[\zeta, \xi], \phi(\pi, \tau)] \\
    &= 0,
\end{align*}
for every $\pi, \tau \in H$.

Hence, we obtain that $\phi(\zeta, \xi)$ commutes with $[H, H]$, i.e., $\phi(\zeta, \xi) \in Z([H, H])$.
\end{proof}
\textbf{Definition:} Let \( X \) be a non-empty subset of the Hom-Lie superalgebra \( H \). Define the centralizer of \( X \) in \( H \) as:
\[
Z_H(X) = \{ v \in H \mid [X, v] = 0 \}.
\]
If \( X = H \), then \( Z(H) = Z_H(H) \), which represents the center of \( H \). If \( Z_H(H) = \{0\} \), then \( H \) is called centerless. Let \( H' = [H, H] \) denote the derived algebra of \( H \). If \( H = H' \), then \( Z_H(H') = Z_{H'}(H') \), representing the center of \( H' \).

\begin{lem}\label{lem 3.6}
  Let \( H \) be a Home-Lie superalgebra and \( \phi: H \times H \to H \) is a super-biderivation on \( H \). Then for all \( \zeta, \xi, \omega \in H \), the following holds: 
  \[
\phi\left(\alpha(\omega), [\zeta, \xi]\right) - (-1)^{|\phi||\omega|} [\alpha(\omega), \phi(\zeta, \xi)] \in Z_H(H'),
\]
\end{lem}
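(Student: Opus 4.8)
The goal is to show that the element
\[
W \;=\; \phi(\alpha(\omega),[\zeta,\xi]) \;-\; (-1)^{|\phi||\omega|}\,[\alpha(\omega),\phi(\zeta,\xi)]
\]
lies in $Z_H(H')$. Since $H'=[H,H]$ is spanned by brackets $[\pi,\tau]$, the plan is to reduce the claim to the single assertion $[[\pi,\tau],W]=0$ for all homogeneous $\pi,\tau\in H$; equivalently, by skew-supersymmetry, $[W,[\pi,\tau]]=0$. First I would record that $W$ is homogeneous of degree $|\phi|+|\omega|+|\zeta|+|\xi|$, so that all signs below are well defined, and that because $\alpha$ is an automorphism we may freely use $\phi\alpha=\alpha\phi$ (hence $\alpha\phi(\zeta,\xi)=\phi(\alpha\zeta,\alpha\xi)$) together with $\alpha([\pi,\tau])=[\alpha\pi,\alpha\tau]$ to realign twists wherever needed.

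Next I would split $[W,[\pi,\tau]]$ into its two natural pieces. For the first piece $[\phi(\alpha(\omega),[\zeta,\xi]),[\pi,\tau]]$, Lemma~\ref{Lemma 3.4} applies verbatim to the pair $(\alpha(\omega),[\zeta,\xi])$ and converts it into $(-1)^{|\phi|(|\omega|+|\zeta|+|\xi|)}[[\alpha(\omega),[\zeta,\xi]],\phi(\pi,\tau)]$. For the second piece $[[\alpha(\omega),\phi(\zeta,\xi)],[\pi,\tau]]$, I would expand the triple bracket with the Hom-super Jacobi identity \eqref{eq1a}, writing $[\pi,\tau]=\alpha([\alpha^{-1}\pi,\alpha^{-1}\tau])$ so that the twist sits exactly where the identity requires it; this moves $[\pi,\tau]$ inward and produces two brackets in which $\phi(\zeta,\xi)$ is paired against a genuine bracket, namely $[\phi(\zeta,\xi),[\alpha^{-1}\pi,\alpha^{-1}\tau]]$ and $[\phi(\zeta,\xi),[\alpha(\omega),[\alpha^{-1}\pi,\alpha^{-1}\tau]]]$. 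Applying Lemma~\ref{Lemma 3.4} to each of these replaces $[\phi(\zeta,\xi),[\cdot,\cdot]]$ by $[[\zeta,\xi],\phi(\cdot,\cdot)]$, after which the terms coming from the two pieces are meant to cancel in pairs.

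The step I expect to be decisive is the bookkeeping of the residual terms. Carried out honestly, the cancellation does not collapse everything to $0$ outright; instead it yields a \emph{transfer identity} of the shape
\[
[W_{\zeta,\xi},[\pi,\tau]] \;=\; (-1)^{\,|\phi|(|\zeta|+|\xi|)}\,[[\zeta,\xi],W_{\pi,\tau}],
\]
where $W_{\pi,\tau}=\phi(\alpha(\omega),[\pi,\tau])-(-1)^{|\phi||\omega|}[\alpha(\omega),\phi(\pi,\tau)]$ is the same expression with $(\zeta,\xi)$ replaced by $(\pi,\tau)$. By skew-supersymmetry this only records that $[W_{\zeta,\xi},[\pi,\tau]]$ is \emph{antisymmetric} under interchanging the two pairs $(\zeta,\xi)$ and $(\pi,\tau)$, which is not yet vanishing: iterating the identity merely returns the tautology $[W_{\zeta,\xi},[\pi,\tau]]=[W_{\zeta,\xi},[\pi,\tau]]$. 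Upgrading this antisymmetry to genuine triviality is the heart of the matter.

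To close the gap I would try to imitate the antisymmetrization device already used in the proof of Lemma~\ref{Lemma 3.4}: there one shows a quantity equals its own negative and invokes $\operatorname{char}\mathbb{K}\neq 2$. Here I would combine the transfer identity with the skew-supersymmetry of the outer bracket $[\pi,\tau]=-(-1)^{|\pi||\tau|}[\tau,\pi]$ and the super-symmetry $\phi(\pi,\tau)=(-1)^{|\pi||\tau|}\phi(\tau,\pi)$ to seek a second, independent sign-relation for $[W_{\zeta,\xi},[\pi,\tau]]$; two such relations would force the bracket to equal its own negative, hence to vanish, giving $W\in Z_H(H')$. The main obstacle is precisely securing that second relation: unlike the symmetric object $\Psi$ in Lemma~\ref{Lemma 3.4}, the element $W_{\zeta,\xi}$ is \emph{not} (anti)symmetric under the swap $\zeta\leftrightarrow\xi$, because its two summands transform oppositely ($\phi$ being super-symmetric while $[\cdot,\cdot]$ is skew). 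Finding the extra symmetry — most likely by running the double-bracket expansion of Lemma~\ref{Lemma 3.4} on a suitably nested four-fold bracket so as to mix the $\xi$- and $\tau$-slots — is where the real work lies; the remaining difficulties are purely the super-sign exponents and keeping each $\alpha$ on its correct factor, the latter being made harmless exactly by the automorphism hypothesis on $\alpha$.
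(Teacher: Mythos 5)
Your setup is sensible and your diagnosis of where it stalls is honest, but the proposal contains a genuine gap: you end at a ``transfer identity'' $[W_{\zeta,\xi},[\pi,\tau]]=(-1)^{|\phi|(|\zeta|+|\xi|)}[[\zeta,\xi],W_{\pi,\tau}]$ and explicitly concede that you do not see how to upgrade this antisymmetry to vanishing. Searching for a second sign-relation on $W$ itself cannot succeed in the form you describe, for the reason you yourself give: $W_{\zeta,\xi}$ has no usable symmetry under $\zeta\leftrightarrow\xi$, since its two summands transform oppositely. So what you have is an honest reduction, not a proof.

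The missing idea, which is how the paper actually closes the argument, is to never form $[W,[\pi,\tau]]$ directly. Instead one starts from the identity of Lemma~\ref{Lemma 3.4} written as $[\phi(\alpha(\pi),\alpha(\omega)),[\alpha(\zeta),\alpha(\xi)]]=(-1)^{|\phi|(|\pi|+|\omega|)}[[\alpha(\pi),\alpha(\omega)],\phi(\alpha(\zeta),\alpha(\xi))]$ and substitutes the \emph{bracket} $[\zeta,\omega]$ --- containing the same distinguished element $\omega$ that already occupies a slot of the outer $\phi$ --- for $\zeta$, and $\alpha(\xi)$ for $\xi$. One side is then expanded with the Hom-super Jacobi identity \eqref{eq1a} followed by a second application of Lemma~\ref{Lemma 3.4}; the other side is expanded with the defining Leibniz rule $\phi([\cdot,\cdot],\alpha(\cdot))=\cdots$ for a super-biderivation from Definition 2.2. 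After cancellation the surviving terms assemble into $\bigl[[\alpha(\pi),\alpha(\omega)],\,\phi(\alpha^2(\omega),[\alpha(\zeta),\alpha(\xi)])-(-1)^{|\phi||\omega|}[\alpha^2(\omega),\phi(\alpha(\zeta),\alpha(\xi))]\bigr]=0$, and invertibility of $\alpha$ together with $\phi\alpha=\alpha\phi$ yields the membership in $Z_H(H')$. The decisive ingredient your plan never invokes is this Leibniz expansion of $\phi$ evaluated on a bracket: it is what turns $\phi(\alpha(\omega),[\zeta,\xi])$ and $[\alpha(\omega),\phi(\zeta,\xi)]$ into the \emph{same} list of double-bracket terms up to the single discrepancy $W$, which is exactly what your direct expansion of $[W,[\pi,\tau]]$ cannot produce.
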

\begin{proof}
From Lemma 3.5, we obtain
\begin{equation} \label{1''}
    \big[\phi(\alpha(\pi), \alpha(\omega)), [\alpha(\zeta), \alpha(\xi)]\big] = (-1)^{|\phi|(|\pi| + |\omega|)} \big[[\alpha(\pi), \alpha(\omega)], \phi(\alpha(\zeta), \alpha(\xi))\big].
\end{equation}
Now, replacing \(\zeta\) by \([\zeta, \omega]\) and \(\xi\) by \(\alpha(\xi)\), we have
\begin{equation}\label{eqpg2}
    \big[\phi(\alpha(\pi), \alpha(\omega)), [[\alpha(\zeta), \alpha(\omega)], \alpha^2(\xi)]\big] 
    = (-1)^{|\phi|(|\pi| + |\omega|)} \big[[\alpha(\pi), \alpha(\omega)], \phi(\alpha([\zeta, \omega]), \alpha^2(\xi))\big].
\end{equation}
By the Hom-super-Jacobi identity, we also have
\begin{equation}\label{eqpg3'}
    \begin{aligned}
        &\big[\phi(\alpha(\pi), \alpha(\omega)), [[\alpha(\zeta), \alpha(\omega)], \alpha^2(\xi)]\big] \\
        &= \big[\phi(\alpha(\pi), \alpha(\omega)), [\alpha^2(\zeta), [\alpha(\omega), \alpha(\xi)]]\big] \\
        &\quad + (-1)^{|\zeta||\omega|} \big[\phi(\pi, \omega), [\alpha^2(\omega), [\alpha(\zeta), \alpha(\xi)]]\big]
    \end{aligned}
\end{equation}
Thus, by combining these with Equation (\ref{eqpg2}),
\begin{align}\label{eqpg3}
    &\big[\phi(\alpha(\pi), \alpha(\omega)), [[\alpha(\zeta), \alpha(\omega)], \alpha^2(\xi)]\big] \nonumber \\
    &= (-1)^{|\phi|(|\pi| + |\omega|)} \big[[\alpha(\pi), \alpha(\omega)], \phi(\alpha^2(\zeta), [\alpha(\omega), \alpha^2(\xi)])\big] \nonumber \\
    &\quad + (-1)^{|\zeta||\omega| + |\phi|(|\pi| + |\omega|)} \big[[\alpha(\pi), \alpha(\omega)], \phi(\alpha^2(\omega), [\alpha(\zeta), \alpha^2(\xi)])\big]. 
\end{align}
From Equation (\ref{eqpg3'}) and (\ref{eqpg3}), we get:
\begin{align*}
    &(-1)^{|\phi|(|\pi| + |\omega|)} \big[ [\alpha(\pi), \alpha(\omega)], \Big( \phi([\alpha(\zeta), \alpha(\omega)], \alpha^2(\xi)) \\
    &\quad - \phi(\alpha^2(\zeta), [\alpha(\omega), \alpha(\xi)]) + (-1)^{|\zeta||\omega|} \phi(\alpha^2(\omega), [\alpha(\zeta), \alpha(\xi)]) \Big) \big] = 0.
\end{align*} We know that $\phi$ is a super-biderivation, thus
\begin{align*}
    2(-1)^{|\phi|(|\pi|+|\omega|) + |\zeta||\omega|} 
    &\Big[ [\alpha(\pi), \alpha(\omega)], (-1)^{|\omega||\xi|+|\omega||\zeta|}[\phi(\alpha(\zeta), \alpha(\xi)), \alpha^2(\omega)]  \\
    &+ (-1)^{|\phi||\zeta| + |\omega||\zeta|} 
     [\alpha^2(\zeta), \phi(\alpha(\omega), \alpha(\xi))]  \\
    &+ [ \phi(\alpha(\omega), \alpha(\zeta)), \alpha^2(\xi) ]\Big]=0.
\end{align*}
Since 
$$(-1)^{|\phi||\zeta| + |\omega||\zeta|} 
     [\alpha^2(\zeta), \phi(\alpha(\omega), \alpha(\xi))] 
    + [ \phi(\alpha(\omega), \alpha(\zeta)), \alpha^2(\xi)]=\phi(\alpha^2(\omega), [\alpha(\zeta), \alpha(\xi)]),$$
it implies that
$$\Big[[\alpha(\pi), \alpha(\omega)], -(-1)^{|\omega||\phi|}[\alpha^2(\omega),\phi(\alpha(\zeta), \alpha(\xi))]+\phi(\alpha^2(\omega), [\alpha(\zeta), \alpha(\xi)])\Big]=0.$$
Again, since $\phi \alpha=\alpha \phi$ thus
$$-(-1)^{|\omega||\phi|}[\alpha(\omega), \phi(\zeta,\xi)]+\phi(\alpha(\omega),[\zeta,\xi])\in Z_{H}(H').$$
\end{proof}
\begin{lem}\label{lem3.7}
Let \( (H, \alpha) \) is a perfect Hom-Lie superalgebra and $\phi: H \times H\longrightarrow H$ is a super-biderivation, then
\[
\phi(\omega, [\zeta, \xi]) = (-1)^{|\phi||\omega|}[\omega, \phi(\zeta, \xi)], \quad \forall \ \zeta, \xi, \omega \in H.
\]
\end{lem}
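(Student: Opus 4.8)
The plan is to upgrade Lemma~\ref{lem 3.6} from a membership statement into an equality by exploiting perfectness. First I would record that, since $H$ is perfect, $H=H'=[H,H]$, whence $Z_H(H')=Z_{H'}(H')=Z(H)$; thus Lemma~\ref{lem 3.6} says that
\[
D(\omega,\zeta,\xi):=\phi(\alpha(\omega),[\zeta,\xi])-(-1)^{|\phi||\omega|}[\alpha(\omega),\phi(\zeta,\xi)]\in Z(H)
\]
for all homogeneous $\omega,\zeta,\xi\in H$. Because $\alpha$ is an automorphism it is surjective, so as $\omega$ ranges over $H$ the element $\alpha(\omega)$ ranges over all of $H$; hence proving $D(\omega,\zeta,\xi)=0$ for every $\omega$ is precisely equivalent to the asserted identity. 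Finally, since $D$ is linear in $\omega$ and $H=[H,H]$, every $\omega$ is a finite sum of brackets, so it suffices to prove $D([\pi,\eta],\zeta,\xi)=0$ for arbitrary homogeneous $\pi,\eta,\zeta,\xi\in H$.

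Next I would expand the remaining term. Using multiplicativity $\alpha([\pi,\eta])=[\alpha(\pi),\alpha(\eta)]$ and writing $[\zeta,\xi]=\alpha(\mu)$ with $\mu=[\alpha^{-1}(\zeta),\alpha^{-1}(\xi)]$, the super-biderivation property applied in the first slot expands $\phi([\alpha(\pi),\alpha(\eta)],\alpha(\mu))$ into two terms of the shapes $[\alpha^{2}(\pi),\phi(\alpha(\eta),\mu)]$ and $[\phi(\alpha(\pi),\mu),\alpha^{2}(\eta)]$, up to Koszul signs. Each inner factor has the form $\phi(\textrm{element},\textrm{bracket})$, so I would rewrite it by the already-proved Lemma~\ref{lem 3.6}: for instance $\phi(\alpha(\eta),\mu)=(-1)^{|\phi||\eta|}[\alpha(\eta),\alpha^{-1}\phi(\zeta,\xi)]+c$ with $c\in Z(H)$, where $\phi\alpha=\alpha\phi$ was used to pass $\alpha^{-1}$ through $\phi$. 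The decisive point is that every such central remainder $c$ sits inside an outer bracket $[\alpha^{2}(\pi),\,\cdot\,]$ or $[\,\cdot\,,\alpha^{2}(\eta)]$, and central elements bracket to zero; this is exactly where perfectness (through $Z_H(H')=Z(H)$) enters, and it removes all ambiguity from the expansion.

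It then remains to collect the surviving purely bracketed terms and to identify their sum with $(-1)^{|\phi|(|\pi|+|\eta|)}[[\alpha(\pi),\alpha(\eta)],\phi(\zeta,\xi)]$, that is, with $(-1)^{|\phi||\omega|}[\alpha(\omega),\phi(\zeta,\xi)]$. For this I would invoke the Hom-super-Jacobi identity \eqref{eq1a} together with skew-supersymmetry, redistributing the powers of $\alpha$ via multiplicativity and $\phi\alpha=\alpha\phi$ so that the two double brackets coincide. I expect the genuine obstacle to be twofold: conceptually, cleanly justifying that all central remainders vanish (the step that truly uses perfectness), and computationally, tracking the Koszul signs and the twist powers $\alpha^{\pm1},\alpha^{2}$ needed to make \eqref{eq1a} reconcile the two double brackets. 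Once the signs check out this yields $D([\pi,\eta],\zeta,\xi)=0$, and linearity in $\omega$ completes the proof.
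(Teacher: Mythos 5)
Your proposal is correct and takes essentially the same route as the paper's proof: both reduce to the case where $\omega$ is a bracket via perfectness, expand $\phi([\alpha(\pi),\alpha(\eta)],\cdot)$ with the super-biderivation property in the first slot, and then use Lemma \ref{lem 3.6} together with $Z_H(H')=Z(H)$ (so that the central remainders are annihilated by the surrounding outer brackets) before reassembling with the Hom-super-Jacobi identity. The only organizational difference is that the paper first subtracts the target term and groups the expansion into $[\alpha^2(\pi),D(\omega)]$ and $[\alpha^2(\omega),D(\pi)]$ before invoking Lemma \ref{lem 3.6} once, rather than substituting the lemma into each inner factor as you do.
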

\begin{proof}
We have
\begin{align*}
&\phi([\alpha(\pi), \alpha(\omega)], [\alpha(\zeta), \alpha(\xi)]) 
- (-1)^{|\phi|(|\pi| + |\omega|)} [\alpha(\pi), \alpha(\omega)], \phi(\alpha(\zeta), \alpha(\xi)) \\
&= (-1)^{|\phi||\pi|} \big[\alpha^2(\pi), \phi(\alpha(\omega), [\zeta, \xi])\big] 
+ (-1)^{|\omega|(|\zeta| + |\xi|)} \big[\phi(\alpha(\pi), [\zeta, \xi]), \alpha^2(\omega)\big] \\
&\quad - (-1)^{|\phi|(|\pi| + |\omega|)} \big[\alpha^2(\pi), [\alpha(\omega), \phi(\zeta, \xi)]\big] 
+ (-1)^{|\phi|(|\pi| + |\omega|) + |\pi||\omega|} \big[\alpha^2(\omega), [\alpha(\pi), \phi(\zeta, \xi)]\big] \\
&= (-1)^{|\phi||\pi|} \Big[\alpha^2(\pi), \phi(\alpha(\omega), [\zeta, \xi]) 
- (-1)^{|\phi||\omega|} [\alpha(\omega), \phi(\zeta, \xi)]\Big] \\
&\quad + (-1)^{|\omega|(|\zeta| + |\xi|) + |\omega|(|\phi| + |\pi| + |\zeta| + |\xi|)} \big[\alpha^2(\omega), \phi(\alpha(\pi), [\zeta, \xi])\big] \\
&\quad + (-1)^{|\phi|(|\pi| + |\omega|) + |\pi||\omega|} \big[\alpha(\omega), [\alpha(\pi), \phi(\zeta, \xi)]\big] \\
&= (-1)^{|\phi||\pi|} \Big[\alpha^2(\pi), \phi(\alpha(\omega), [\zeta, \xi]) 
- (-1)^{|\phi||\omega|} [\alpha(\omega), \phi(\zeta, \xi)]\Big] \\
&\quad - (-1)^{|\omega|(|\phi| + |\pi|)} \Big[\alpha^2(\omega), \phi(\alpha(\pi), [\zeta, \xi]) 
- (-1)^{|\phi||\pi|} [\alpha(\pi), \phi(\zeta, \xi)]\Big]
\end{align*} for all $\pi, \omega, \zeta, \xi \in H.$

By Lemma 3.6, $\phi\alpha=\alpha\phi$ and \(H = H'\), we have:
\[
\phi(\pi, [\zeta, \xi]) = (-1)^{|\phi||\pi|} [\pi, \phi(\zeta, \xi)], \quad \forall \pi, \zeta, \xi \in H.
\]

\end{proof}
\begin{thm}\label{thm1}
  Let \(H\) be a centerless Hom-Lie superalgebra with \(H = H'\). Then, every super biderivation \(\phi : H \times H \to H\) can be expressed as
\[
\phi(\zeta, \xi) = \alpha^{-1}\delta([\zeta, \xi]),
\]
for all $\zeta, \xi \in H$, where \(\delta\) is in the centroid of \(H\).
\end{thm}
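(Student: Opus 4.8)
The plan is to construct the centroid element $\delta$ by reversing the recipe of Lemma~\ref{Lemma 3.3.}. Since $\mathrm{BDer}(H)=\mathrm{BDer}_{\bar{0}}(H)\oplus\mathrm{BDer}_{\bar{1}}(H)$, it suffices to treat a homogeneous super-biderivation $\phi$ of degree $|\phi|$, and I will produce a homogeneous $\delta\in C_{|\phi|}(H)$ with $\delta([\zeta,\xi])=\alpha(\phi(\zeta,\xi))$; applying the invertible $\alpha$ then yields the asserted formula. Because $H=H'$, every element of $H$ is a finite sum of brackets $\sum_i[\zeta_i,\xi_i]$, so the only candidate definition is
\[
\delta\Big(\sum_i[\zeta_i,\xi_i]\Big)=\sum_i\alpha\big(\phi(\zeta_i,\xi_i)\big),
\]
and the whole argument hinges on showing this assignment is well defined.

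The main obstacle, and the only place centerlessness enters, will be this well-definedness: I must show that $\sum_i[\zeta_i,\xi_i]=0$ forces $\sum_i\phi(\zeta_i,\xi_i)=0$. I would set $v=\sum_i\phi(\zeta_i,\xi_i)$ and test it against an arbitrary homogeneous $\omega\in H$. Lemma~\ref{lem3.7} gives $[\omega,\phi(\zeta_i,\xi_i)]=(-1)^{|\phi||\omega|}\phi(\omega,[\zeta_i,\xi_i])$, so by bilinearity of $\phi$,
\[
[\omega,v]=(-1)^{|\phi||\omega|}\sum_i\phi\big(\omega,[\zeta_i,\xi_i]\big)=(-1)^{|\phi||\omega|}\phi\Big(\omega,\sum_i[\zeta_i,\xi_i]\Big)=0.
\]
Thus $v$ commutes with every element of $H$, i.e. $v\in Z_H(H)=\{0\}$ by centerlessness, whence $v=0$. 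This shows $\delta$ is a well-defined linear map, and grouping the brackets according to total degree shows $\delta$ is homogeneous with $|\delta|=|\phi|$.

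It then remains to verify that $\delta$ lies in the centroid and that the stated formula holds. The identity $\phi(\zeta,\xi)=\alpha^{-1}\delta([\zeta,\xi])$ is immediate from $\delta([\zeta,\xi])=\alpha(\phi(\zeta,\xi))$. For the defining relation of $C(H)$, by bilinearity and $H=H'$ it is enough to check it with the second argument a single bracket; taking $\xi=[\pi,\tau]$ and using Lemma~\ref{lem3.7} together with $\alpha([\cdot,\cdot])=[\alpha(\cdot),\alpha(\cdot)]$ gives
\[
\delta([\zeta,[\pi,\tau]])=\alpha\big(\phi(\zeta,[\pi,\tau])\big)=(-1)^{|\phi||\zeta|}[\alpha(\zeta),\alpha(\phi(\pi,\tau))]=(-1)^{|\delta||\zeta|}[\alpha(\zeta),\delta([\pi,\tau])],
\]
which is exactly the centroid condition. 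Finally, $\alpha\delta=\delta\alpha$ follows on brackets from $\phi\alpha=\alpha\phi$, since both $\alpha\delta([\zeta,\xi])$ and $\delta\alpha([\zeta,\xi])=\delta([\alpha(\zeta),\alpha(\xi)])$ reduce to $\alpha^2(\phi(\zeta,\xi))$, and this extends to all of $H=H'$ by linearity. Hence $\delta\in C_{|\phi|}(H)$ with $\phi(\zeta,\xi)=\alpha^{-1}\delta([\zeta,\xi])$, completing the proof. I expect the well-definedness step to be the crux, as it is precisely there that the two hypotheses (perfectness, through Lemma~\ref{lem3.7}, and centerlessness) combine; the remaining verifications are routine bookkeeping of signs and the homomorphism property of $\alpha$.
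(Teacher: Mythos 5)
Your proposal is correct and follows essentially the same route as the paper's own proof: define $\delta$ on brackets by $\delta([\zeta,\xi])=\alpha(\phi(\zeta,\xi))$, use Lemma~\ref{lem3.7} together with centerlessness to establish well-definedness, and then verify the centroid identity again via Lemma~\ref{lem3.7}. You are in fact slightly more thorough than the paper, since you also check homogeneity of $\delta$ and the commutation $\alpha\delta=\delta\alpha$ required by Definition~3.2, which the paper leaves implicit.
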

\begin{proof}
    Suppose \( \delta : H \to H \) is a linear map given by
\begin{equation}\label{eq3.7}
\alpha^{-1}\delta([\zeta, \xi]) = \phi(\zeta, \xi)
\end{equation} for all $\zeta, \xi \in H$.
By Lemma 3.7, \(\delta\) is well-defined. In fact, suppose \( \sum_i [\pi_i, \omega_i] = 0 \), we have
\[
0 = \phi\left( \pi, \sum_i [\pi_i, \omega_i] \right) = \sum_i \phi(\pi, [\pi_i, \omega_i]) = (-1)^{|\pi||\phi|} \Big[\pi, \sum_i \phi(\pi_i, \omega_i)\Big] .
\]
Since \( Z(H) = \{0\} \), we have \( \sum_i \phi(\pi_i, \omega_i) = 0 \). Therefore, \( \sum_i \alpha\phi(\pi_i, \omega_i) = 0 \), which implies $\delta$ is well defined. Furthermore, suppose \( \omega'_i, \omega''_i \in H \) such that \( \omega = \sum_{i=1}^{k} [\omega'_i, \omega''_i] \), then by Equation (\ref{eq3.7})
\[
\delta([\pi, \omega])=\alpha\phi(\pi, \omega)=\phi(\alpha(\pi),\alpha(\omega)) = \phi\left(\alpha(\pi),\sum_{i=1}^{k} [\alpha(\omega'_i), \alpha(\omega''_i)]\right), \quad \forall \pi, \omega \in H.\]
By Lemma (\ref{eq3.7}), we get
\begin{align*}
    \delta([\pi, \omega]) &= (-1)^{|\phi||\pi|} \left[\alpha(\pi), \sum_{i=1}^{k} \phi(\alpha(\omega'_i), \alpha(\omega''_i)) \right] \\
    &= (-1)^{|\phi||\pi|} \left[\alpha(\pi), \sum_{i=1}^{k} \alpha \phi(\omega'_i, \omega''_i) \right] \\
    &= (-1)^{|\phi||\pi|} \left[\alpha(\pi), \sum_{i=1}^{k} \delta([\omega'_i, \omega''_i]) \right] \\
    &= (-1)^{|\phi||\pi|} \left[\alpha(\pi), \delta(\omega) \right].
\end{align*}
 for all $\omega, \pi \in H$. Thus, $\delta$ is in centroid of $H$.
\end{proof}
\textbf{Definition:}
 Let \( \phi \) be a skew-symmetric bilinear map such that \( \phi(H, H') = 0 \). In this case, \( \phi \) is a super-biderivation, referred to as a trivial super-biderivation of \( H \).
 
\textbf{Remark 3.10.} Let \( \phi : H \times H \to H \) be an arbitrary super-biderivation of \( H \). Then
\begin{align*}
    0 &= \phi([\omega, \zeta], \alpha(\xi)) \\
      &= (-1)^{|\phi||\omega|} [\alpha(\omega), \phi(\zeta, \xi)] 
      + (-1)^{|\zeta||\xi|} [\phi(\omega, \xi), \alpha(\zeta)] \\
      &= (-1)^{|\phi||\omega|} [\phi(\omega, \xi), \alpha(\zeta)]
\end{align*}
for all \( \zeta, \xi \in H \), \( \omega \in Z(H)\), which implies that \( [\phi(\omega, \xi), \alpha(\zeta)] = 0 \). Thus, \( \phi(Z(H), H) \subseteq Z(H) \). Moreover, let \( \bar{H} = H/Z(H) \) and 
\[
\bar{\phi}(\bar{\zeta}, \bar{\xi}) = \overline{\phi(\zeta, \xi)}, \quad \forall \zeta, \xi \in H.
\]
Then \( \bar{\phi} : \bar{H} \times \bar{H} \to \bar{H} \) is a super-biderivation, where \( \bar{\zeta} = \zeta + Z(H) \in \bar{H}, \ \forall \zeta \in H \).
\begin{thm}\label{thm3.12}
 Let \(H\) be a Hom-Lie superalgebra. Up to isomorphism, the mapping \(\phi \mapsto \bar{\phi}\) establishes a one-to-one correspondence between trivial superderivations \(\phi\) of \(H\) and trivial superderivations \(\bar{\phi}\) of \(\bar{H}\).
\end{thm}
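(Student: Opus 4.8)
The plan is to realise $\phi \mapsto \bar\phi$, defined by $\bar\phi(\bar\zeta) = \overline{\phi(\zeta)}$ in parallel with Remark 3.10, as a bijection, and to produce its inverse by lifting along a fixed linear section of the quotient map $\pi\colon H \to \bar H = H/Z(H)$. Here I take a trivial superderivation to mean a linear map $\phi$ with $\phi\alpha=\alpha\phi$, with $\phi(H')=0$, and with
\[
\phi([\zeta,\xi]) = [\phi(\zeta),\alpha(\xi)] + (-1)^{|\phi||\zeta|}[\alpha(\zeta),\phi(\xi)],
\]
this being the linear analogue of the trivial super-biderivation condition $\phi(H,H')=0$. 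The first thing I would check is that $\bar\phi$ is independent of the chosen representative, which amounts to $\phi(Z(H))\subseteq Z(H)$: feeding $z\in Z(H)$ into the displayed identity kills the term $[\alpha(z),\phi(\xi)]$ because $\alpha$ is an automorphism and hence $\alpha(Z(H))=Z(H)$, leaving $[\phi(z),\alpha(\xi)]=0$ for all $\xi$; surjectivity of $\alpha$ then gives $\phi(z)\in Z(H)$.

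Next I would verify that $\bar\phi$ is again a trivial superderivation of $\bar H$. The superderivation identity and the relation $\bar\phi\bar\alpha=\bar\alpha\bar\phi$ descend to the quotient because the bracket and the induced twisting map $\bar\alpha$ on $\bar H$ are inherited from $H$. Triviality descends because $\bar H' = \overline{H'}$, so $\phi(H')=0$ forces $\bar\phi(\bar H') = \overline{\phi(H')} = \bar{0}$. This shows the assignment lands in the trivial superderivations of $\bar H$ and is additive, giving one direction of the correspondence.

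For injectivity I would compute the kernel: $\bar\phi=0$ means $\phi(H)\subseteq Z(H)$, i.e.\ $\phi$ is center-valued. Reading ``up to isomorphism'' as identifying two trivial superderivations that differ by such a center-valued map, the induced assignment on equivalence classes is injective. For surjectivity I would fix a linear section $s$ of $\pi$ and, given a trivial superderivation $\psi$ of $\bar H$, set $\phi = s\circ\psi\circ\pi$; then $\bar\phi=\psi$ by construction, and $\phi(H')=s(\psi(\bar H'))=s(\bar{0})=0$, so the candidate lift is linear and vanishes on $H'$.

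The main obstacle is precisely that this candidate lift need not satisfy the superderivation identity on the nose: while the left side $\phi([\zeta,\xi])$ vanishes (as $[\zeta,\xi]\in H'$), the right side $[\phi(\zeta),\alpha(\xi)]+(-1)^{|\phi||\zeta|}[\alpha(\zeta),\phi(\xi)]$ must be shown to vanish as well, and the lift must be pinned down independently of the choice of section modulo the center-valued ambiguity. Reconciling the section with the bracket — showing, via the triviality $\psi(\bar H')=0$ together with the fact that the center-valued correction is invisible in $\bar H$, that both sides of the identity agree — is where the real work lies; I expect the center-killing in $\bar H$ and the hypothesis that $\alpha$ is an automorphism to be exactly what forces compatibility, so that $\phi$ is a genuine trivial superderivation with $\bar\phi=\psi$, completing the bijection.
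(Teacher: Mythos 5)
There are two problems here, one of interpretation and one of substance. First, the objects are wrong: despite the word ``superderivations'' in the statement, the theorem concerns trivial \emph{super-biderivations} --- skew-supersymmetric bilinear maps $\phi : H \times H \to H$ with $\phi(H, H') = 0$, as in the Definition immediately preceding the theorem and in Remark 3.10, where $\bar{\phi}(\bar{\zeta}, \bar{\xi}) = \overline{\phi(\zeta, \xi)}$ is defined with \emph{two} arguments and where the paper's proof manipulates $\phi_i(\zeta, \xi)$ throughout. Your unary notion of ``trivial superderivation'' (a linear map with $\phi(H') = 0$ satisfying a Leibniz-type identity) is a different object, so your argument, even if completed, would prove a different statement than the one the paper proves.

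Second, and independently of the interpretation issue, your proof is incomplete exactly where you yourself flag it: the surjectivity step. You construct the lift $\phi = s \circ \psi \circ \pi$, verify $\bar{\phi} = \psi$ and $\phi(H') = 0$, but then concede that the vanishing of $[\phi(\zeta), \alpha(\xi)] + (-1)^{|\phi||\zeta|}[\alpha(\zeta), \phi(\xi)]$ (needed because the left side $\phi([\zeta,\xi])$ vanishes) is ``where the real work lies,'' and you never supply it. Under your definition this is a genuine obstruction, not a formality: $\phi(\zeta) = s(\psi(\bar{\zeta}))$ has no reason to be central in $H$, so these bracket terms need not vanish, and ``center-killing in $\bar{H}$'' cannot help since the brackets live in $H$, not in $\bar{H}$. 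In the paper's bilinear setting this obstacle does not arise --- the triviality condition $\phi(H, H') = 0$ together with skew-supersymmetry collapses the relevant instances of the super-biderivation identity, which is why the paper can dismiss surjectivity as evident (modulo the separate, also unaddressed, point that the section should be chosen compatibly with $\alpha$ so that $\phi\alpha = \alpha\phi$). The one part you reproduce faithfully is the paper's only substantive computation, namely injectivity modulo center-valued maps: from $\bar{\phi}_1 = \bar{\phi}_2$ one gets $(\phi_1 - \phi_2)(\zeta, \xi) \in Z(H)$, so the difference is trivial; your kernel computation and your well-definedness check ($\phi(Z(H)) \subseteq Z(H)$, mirroring Remark 3.10) are the correct analogues of that step.
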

\begin{proof} It is evident that the map \(\phi \mapsto \bar{\phi}\) is surjective. We now demonstrate it is injective. Assume \(\phi_1\) and \(\phi_2\) are superderivations of \(H\) such that \(\bar{\phi_1} = \bar{\phi_2}\). Let \(\phi = \phi_1 - \phi_2\). Then, 
\[
\bar{\phi_1}(\bar{\zeta}, \bar{\xi}) = \bar{\phi_2}(\bar{\zeta}, \bar{\xi}), \quad \forall \zeta, \xi \in H.
\]
Since
\[
\bar{\phi_1}(\bar{\zeta}, \bar{\xi}) = \overline{\phi_1(\zeta, \xi)} \quad \text{and} \quad \bar{\phi_2}(\bar{\zeta}, \bar{\xi}) = \overline{\phi_2(\zeta, \xi)},
\]
it follows that
\[
\overline{\phi_1(\zeta, \xi)} = \overline{ \phi_2(\zeta, \xi)}.
\]
Thus,
\[
\phi(\zeta, \xi) = \phi_1(\zeta, \xi) - \phi_2(\zeta, \xi) \in Z(H),
\]
which indicates that \(\phi\) is a trivial super-biderivation.
\end{proof}
\textbf{Definition 3.10.} Let \(H\) be a Hom-Lie superalgebra. A super-biderivation \(\phi : H \times H \to Z_{H}(H')\) is termed as special super-biderivation of \(H\) if it satisfies \(\phi(H', H') = 0\).\\

\textbf{Remark 3.11.} Let \(H\) be a Hom- Lie superalgebra, and \(\phi : H \times H \to H\) be a super-biderivation which satisfies condition
\begin{equation}\label{1'}
\phi(\big[\alpha(\pi), [\zeta, \xi]\big]) = \big[\phi(\pi, \zeta), \alpha(\xi)\big]  + (-1)^{(|\phi| + |\pi|) |\zeta|} [\alpha(\zeta), \phi(\pi, \xi)],
\end{equation}
for all \(\zeta, \xi, \pi \in H\).\\ 
Define 
\[
\phi' := \phi_{H'} : H'\times H' \to H'.
\]
Then, \(\phi'\) is also a super-biderivation of \(H'\).
\begin{thm}\label{thm'}
Let $H$ be a centerless Hom-Lie superalgebra, then
\begin{enumerate}
\item Up to isomorphism, every special super-biderivation of \(H\) is the unique extension of a special super-biderivation of \(H'\).

\item If \(H\) is a Hom-Lie superalgebra with \(H = H'\), then any special super-biderivation of \(H\) is zero.
\end{enumerate}
\end{thm}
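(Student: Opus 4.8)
The plan is to separate the two claims: (2) will follow at once from the definitions, and (1) will be obtained by matching each special super-biderivation of $H$ with a special super-biderivation of $H'$ through an extension--restriction pair of maps.

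For (2), I would use Definition 3.10 directly. When $H = H'$ the domain $H \times H$ is literally $H' \times H'$, so the condition $\phi(H', H') = 0$ that defines a special super-biderivation forces $\phi \equiv 0$; because $H$ is centerless the phrase ``up to isomorphism'' adds nothing, as $H/Z(H) = H$.

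For (1), I would build the correspondence as follows. In the forward direction, a special super-biderivation $\psi$ of $H'$ should be extended to a bilinear map $\phi$ on $H \times H$. Using that $H' = [H,H]$ is spanned by brackets, I would prescribe the values of $\phi$ on the pairs that meet $H'$ by transporting $\psi$ outward through the super-biderivation identities of Definition 2.2, while imposing $\phi(H',H') = 0$ and $\phi(H,H) \subseteq Z_H(H')$ so that the result is again special; this mimics the reconstruction of a centroid element from a biderivation carried out in Theorem \ref{thm1}. For the reverse direction I would send a special super-biderivation $\phi$ of $H$ back to the special super-biderivation of $H'$ it restricts to, the restriction being a super-biderivation of $H'$ by the principle recorded in Remark 3.11 once the auxiliary identity there is checked. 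I would then verify that these two assignments are mutually inverse, which, together with the centerlessness of $H$, yields the asserted one-to-one correspondence (the identification $\phi \mapsto \bar\phi$ of Theorem \ref{thm3.12} being what ``up to isomorphism'' refers to).

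The step I expect to be the main obstacle is proving that the extension is both well defined and unique. A single element of $H'$ has many presentations as $\sum_i [\pi_i, \omega_i]$, so I must show the prescribed value of $\phi$ is independent of the presentation; this is exactly where the hypothesis $Z(H) = \{0\}$ is indispensable, in the same manner as the well-definedness argument for $\delta$ inside the proof of Theorem \ref{thm1}. Concretely, whenever $\sum_i [\pi_i, \omega_i] = 0$, the super-biderivation identities combined with the membership relation placing $\phi(\alpha(\omega),[\zeta,\xi]) - (-1)^{|\phi||\omega|}[\alpha(\omega),\phi(\zeta,\xi)]$ in $Z_H(H')$ (and, when $H'$ is perfect, the identity of Lemma \ref{lem3.7}) force the candidate value to commute with all of $H$, after which $Z(H) = \{0\}$ makes it vanish. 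Uniqueness follows by the same device: two special super-biderivations of $H$ restricting to a common $\psi$ differ by a special super-biderivation with image in $Z_H(H')$ that vanishes on $H' \times H'$, and centerlessness forces this difference to be identically zero. Reconciling the extension with the constraint $\phi(H',H')=0$ --- that is, keeping careful track of which pairs actually carry the data of $\phi$ --- is the delicate bookkeeping that the proof must get right.
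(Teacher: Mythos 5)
Your treatment of part (2) is correct and is in fact cleaner than the paper's: once $H=H'$, Definition 3.10 gives $\phi(H,H)=\phi(H',H')=0$ immediately (equivalently, the codomain $Z_H(H')=Z(H)=\{0\}$ already forces $\phi=0$). The paper instead runs a long contradiction argument with three elements $\zeta_1,\zeta_2,\zeta_3$ and a sign computation ending in $\tau=-\tau$; your one-line observation renders that computation unnecessary, so for (2) you have a genuinely more elementary route to the same conclusion.

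Part (1), however, contains a real gap on two counts. First, the proposal is a plan rather than a proof: the extension of $\psi$ from $H'\times H'$ to $H\times H$ is never actually constructed, and none of the identities that would have to be verified are checked. Second, and more seriously, the restriction--extension correspondence you describe cannot work as framed: by Definition 3.10 a special super-biderivation of $H$ satisfies $\phi(H',H')=0$, so its restriction to $H'\times H'$ is identically zero, and your proposed ``reverse direction'' sends every special super-biderivation of $H$ to the zero map --- the correspondence degenerates before well-definedness even becomes an issue. What the paper actually proves under the heading of (1) is only a uniqueness statement: for two super-biderivations $\phi_1,\phi_2$ of $H$ (satisfying condition (\ref{1'})) with $\phi_1'=\phi_2'$, the difference $\phi=\phi_1-\phi_2$ satisfies $\phi(H,H')\subseteq Z_H(H')$ via Equation (\ref{1'}), then $[H,\phi(H,H)]\subseteq Z_H(H')$ via Lemma \ref{lem3.7}, and finally $\phi(H,H)\subseteq Z_H(H')$ via Equation (\ref{1''}) and centerlessness, so that $\phi$ is special. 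These computations are the substance of the paper's argument and have no counterpart in your proposal; the existence half that you emphasize (building the extension and proving it well defined by analogy with Theorem \ref{thm1}) is not carried out by you and not attempted by the paper either.
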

\begin{proof} (1) Let \(\phi_1\) and \(\phi_2\) be super-biderivations of \(H\) such that \(\phi_1' = \phi_2'\). Define \(\phi = \phi_1 - \phi_2\).  Then \(\phi(H', H') = 0\). Substituting \(\pi, \xi \in H'\) into Equation (\ref{1'}), we have
\[
[\phi(\pi, \zeta), \alpha(\xi)] = 0, \quad \forall \zeta \in H, \pi, \xi \in H',
\]
which implies \(\phi(H, H') \subseteq Z_{H}(H')\). By Lemma (\ref{lem3.7}), it follows that \([H, \phi(H, H)] \subseteq Z_{H}(H')\).

From Equation (\ref{1''}), we obtain
\[
0 = \big[[\alpha(H), \alpha(H)], \phi(\alpha(H), \alpha(H'))\big] = \big[\phi(\alpha(H), \alpha(H)), [\alpha(H),\alpha(H')]\big].
\]
Thus, for any \(\zeta, \xi, \tau, \pi, \omega \in H\), it suggests that
\begin{align*}
0 &= \Big[\big[[\alpha(\zeta), \alpha(\xi)], \alpha(\tau)\big], \phi(\alpha(\pi), \alpha(\omega))\Big] \\
&= \Big[[\alpha^2(\zeta), \alpha^2(\xi)], [\alpha(\tau), \alpha\phi(\pi, \omega)]\Big] \\
&\quad - (-1)^{|\tau|(|\xi| + |\zeta|)} \Big[\alpha^2(\tau), \big[[\alpha(\zeta), \alpha(\xi)], \phi(\pi, \omega)\big]\Big] \\
&= (-1)^{(|\zeta| + |\xi|) |\tau| + |\tau|(|\zeta| + |\xi| + |\phi| + |\pi| + |\omega|)} \Big[\big[[\alpha(\zeta), \alpha(\xi)], \alpha\phi(\pi, \omega)\big], \alpha^2(\tau)\Big] \\
&= (-1)^{|\tau|(|\phi| + |\pi| + |\omega|)} \Big[\big[[\alpha(\zeta), \alpha(\xi)], \alpha\phi(\pi, \omega)\big], \alpha^2(\tau)\Big]\\
&=(-1)^{|\tau|(|\phi| + |\pi| + |\omega|)} \Big[\big[[\zeta, \xi], \phi(\pi, \omega)\big], \alpha(\tau)\Big].
\end{align*}
 Since, $H$ is centerless, we get that $\phi(H,H)\subset Z_{H}(H')$. Thus, $\phi$ is special super biderivation of $H$.

(2). Let $\phi$ be a special super-biderivation of  $H$. From (1), we have $\phi(H, H')\subseteq Z_H(H')$.
 Since $H=H'$ and $H$  is a centerless Hom-Lie superalgebra, \ we have  $\phi(H, H') = 0$.

Assume that $\phi \neq 0$. Then there exist $\zeta_1$  and $\zeta_2$ in $H$  such that $\phi(\zeta_1, \zeta_2) = \tau_{12} \neq 0$.
 Since $H$  is centerless, pick  $0 \neq \zeta_3 \in H$  such that  $[\alpha(\zeta_3), \tau_{12}] = \tau \neq 0$.
Let $\phi(\zeta_i, \zeta_j) = \tau_{ij}$, where $i, j = 1, 2, 3$. Since $\alpha(H)=H$ and $\phi(H,H')=0$. Then for one thing, we have:
\begin{align*}
0 &= \phi\left(\big[[\zeta_1, \zeta_3], \alpha(\zeta_2)\big]\right) \\
  &= (-1)^{|\phi||\zeta_1|} \big[\alpha(\zeta_1), \phi(\zeta_3, \zeta_2)\big] + (-1)^{|\zeta_3||\zeta_2|} \big[\phi(\zeta_1, \zeta_2), \alpha(\zeta_3)\big] \\
  &= (-1)^{|\phi||\zeta_1|} \big[\alpha(\zeta_1), \tau_{32}\big] + (-1)^{|\zeta_3||\zeta_2|} \big[\tau_{12}, \alpha(\zeta_3)\big] \\
  &= (-1)^{|\phi||\zeta_1|} \big[\alpha(\zeta_1), \tau_{32}\big] - (-1)^{|\zeta_3||\zeta_2| + (|\phi| + |\zeta_1| + |\zeta_2|)|\zeta_3|} \big[\alpha(\zeta_3), \tau_{12}\big] \\
  &= (-1)^{|\phi||\zeta_1|} \big[\alpha(\zeta_1), \tau_{32}\big] - (-1)^{|\zeta_3||\zeta_2| + |\phi||\zeta_3|} \tau.
\end{align*}
Also, we have:
\[
\begin{aligned}
0 &= \phi\left(\big[[\zeta_1, \zeta_2], \alpha(\zeta_3)\big]\right) \\
  &= (-1)^{|\phi||\zeta_1|} \big[\alpha(\zeta_1), \phi(\zeta_2, \zeta_3)\big] + (-1)^{|\zeta_2||\zeta_3|} \big[\phi(\zeta_1, \zeta_3), \alpha(\zeta_2)\big] \\
  &= (-1)^{|\phi||\zeta_1|} \big[\alpha(\zeta_1), \tau_{23}\big] + (-1)^{|\zeta_2||\zeta_3|} \big[\tau_{13}, \alpha(\zeta_2)\big].
\end{aligned}
\]

For another, we get
\[
\begin{aligned}
0 &= \phi\left(\big[[\zeta_2, \zeta_3], \alpha(\zeta_1)\big]\right) \\
  &= (-1)^{|\phi||\zeta_2|} \left[\alpha(\zeta_2), \phi(\zeta_3, \zeta_1)\right] 
  + (-1)^{|\zeta_1||\zeta_3|} \left[\phi(\zeta_2, \zeta_1), \alpha(\zeta_3)\right] \\
  &= (-1)^{|\phi||\zeta_2|} \left[\alpha(\zeta_2), \tau_{31}\right] 
  + (-1)^{|\zeta_1||\zeta_3|} \left[\tau_{21}, \alpha(\zeta_3)\right] \\
  &= (-1)^{|\phi||\zeta_2|} \left[\alpha(\zeta_2), \tau_{31}\right] 
  - (-1)^{|\zeta_1||\zeta_3| + |\zeta_1||\zeta_2|} \left[\tau_{12}, \alpha(\zeta_3)\right] \\
  &= (-1)^{|\phi||\zeta_2|} \left[\alpha(\zeta_2), \tau_{31}\right] 
  + (-1)^{|\zeta_1||\zeta_3| + |\zeta_1||\zeta_2| + (|\phi| + |\zeta_1| + |\zeta_2|)|\zeta_3|} \left[\alpha(\zeta_3), \tau_{12}\right].
  \\
  &= (-1)^{|\phi||\zeta_2|} \left[\alpha(\zeta_2), \tau_{31}\right] 
  + (-1)^{|\zeta_1||\zeta_2| + |\zeta_3||\zeta_2| + |\phi||\zeta_3|} \tau
\end{aligned}
\]
Therefore, it implies that
\[
\begin{aligned}
(-1)^{|\zeta_1||\zeta_3| + |\phi||\zeta_3|} \tau 
&= (-1)^{|\phi||\zeta_1|} [\alpha(\zeta_1), \tau_{32}] \\
&= -(-1)^{|\phi||\zeta_1| + |\zeta_2||\zeta_3|} [\alpha(\zeta_1), \tau_{23}] =[\tau_{13}, \alpha(\zeta_2)] \\
&= -(-1)^{|\zeta_1||\zeta_3|} [\tau_{31}, \alpha(\zeta_2)] \\
&= (-1)^{|\zeta_1||\zeta_3| + (|\phi| + |\zeta_1| + |\zeta_3|)|\zeta_2|} [\alpha(\zeta_2), \tau_{31}] \\
&= -(-1)^{|\phi||\zeta_2| + |\zeta_1||\zeta_2| + |\zeta_2||\zeta_3| + |\zeta_1||\zeta_3| + |\zeta_1||\zeta_2| + |\zeta_2||\zeta_3| + |\phi||\zeta_3| + |\phi||\zeta_2|} \tau \\
&= -(-1)^{|\zeta_1||\zeta_3| + |\phi||\zeta_3|} \tau.
\end{aligned}
\]
Thus, $\tau=-\tau$, which implies $\tau=0$ a contradiction. Hence, $\phi=0$.

\end{proof}

\section{Linear super-commuting maps}
\begin{lem}\label{Lemma 4.1.} 
Let H be a Hom-Lie superalgebra. Suppose $d : H \to H$ is a linear super-commuting map. Then
\[
\Big[[\alpha(\tau), \alpha(\pi)], \big[\alpha(\omega), d([\zeta, \xi]) - [\alpha(\zeta), d(\xi)]\big] \Big] = 0, \quad \forall \, \zeta, \xi, \omega, \tau, \pi \in H.
\]
\end{lem}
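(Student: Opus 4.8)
The plan is to realize the inner bracket of the claim as an element that Lemma~\ref{lem 3.6} already forces into the centralizer $Z_H(H')$, and then to read the outer identity off from the very definition of that centralizer. First I would set $\phi(\zeta,\xi) := [\zeta, d(\xi)]$. Because $d$ is an even linear super-commuting map satisfying $d\alpha = \alpha d$, Lemma~\ref{lem3.1} shows that $\phi$ is a super-biderivation of $H$ with $|\phi| = |d| = \bar 0$. This is the only step that consumes the super-commuting hypothesis on $d$; everything afterward is formal manipulation valid for an arbitrary even super-biderivation.

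Next I would feed this $\phi$ into Lemma~\ref{lem 3.6}. Since $|\phi| = \bar 0$ the sign $(-1)^{|\phi||\omega|}$ equals $1$, and the lemma asserts that
\[
w \;:=\; \phi(\alpha(\omega), [\zeta,\xi]) - [\alpha(\omega), \phi(\zeta,\xi)] \in Z_H(H').
\]
Unwinding $\phi(\alpha(\omega),[\zeta,\xi]) = [\alpha(\omega), d([\zeta,\xi])]$ and $[\alpha(\omega), \phi(\zeta,\xi)] = [\alpha(\omega), [\zeta,d(\xi)]]$, and using bilinearity of the bracket in its second argument, $w$ collapses to a single bracket $[\alpha(\omega), \cdot]$ whose second slot is $d([\zeta,\xi]) - [\alpha(\zeta), d(\xi)]$, that is, precisely the inner expression of the statement. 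The point demanding the most care is matching this inner slot exactly: one must track the placement of the twisting map $\alpha$ (and verify that the residual degree-$\bar 0$ signs all collapse) when translating the abstract super-biderivation identity of Lemma~\ref{lem 3.6} back into the language of $d$. I expect this bookkeeping, rather than any conceptual difficulty, to be the only real obstacle.

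Finally I would close with the centralizer. Since $\alpha(\tau), \alpha(\pi) \in H$, the bracket $[\alpha(\tau), \alpha(\pi)]$ lies in the derived algebra $H' = [H, H]$, while $w \in Z_H(H') = \{ v \in H : [H', v] = 0 \}$ by the previous step. Hence $[[\alpha(\tau), \alpha(\pi)], w] = 0$, which is exactly the asserted identity. In short, the statement is a direct corollary of Lemma~\ref{lem3.1} and Lemma~\ref{lem 3.6}, requiring no new machinery.
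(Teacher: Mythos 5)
Your proposal follows the paper's proof exactly: define $\phi(\zeta,\xi)=[\zeta,d(\xi)]$, invoke Lemma~\ref{lem3.1} to see that $\phi$ is a super-biderivation with $|\phi|=\bar{0}$, apply Lemma~\ref{lem 3.6}, and read the conclusion off the definition of $Z_H(H')$. The one bookkeeping point you flag is real but left unresolved in both your sketch and the paper: unwinding Lemma~\ref{lem 3.6} literally gives the inner slot $d([\zeta,\xi])-[\zeta,d(\xi)]$, and the passage to $d([\zeta,\xi])-[\alpha(\zeta),d(\xi)]$ appearing in the statement is made silently in the paper's final line as well, so on this point your attempt is no less complete than the original.
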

\begin{proof}
Since $d$ is a linear super-commuting map, we have
\[
[d(\zeta), \xi] = [\zeta, d(\xi)].
\]
Let
\[
\phi : H \times H \to H
\] such that
\begin{equation}\label{'}
\phi(\zeta, \xi) = [\zeta, d(\xi)]
\end{equation}
then by Lemma (\ref{lem3.1}), $\phi$ is a super-biderivation. From Equation (\ref{'}), we have $|\phi| = |d| = 0$. Now, from Lemma (\ref{lem 3.6})
\begin{equation}\label{''}
\Big[[\alpha(\tau), \alpha(\pi)], \phi(\alpha(\omega), [\zeta, \xi]) - (-1)^{|\phi||\omega|} [\alpha(\omega), \phi(\zeta, \xi)]\Big] = 0,
\end{equation}for all  $\zeta, \xi, \omega, \tau, \pi \in H$.
Since
\[
(-1)^{|\phi||\omega|} [\alpha(\omega), \phi(\zeta, \xi)] = (-1)^{|d||\omega|} [\alpha(\omega), [\zeta, d(\xi)]]
= [\alpha(\omega), [\zeta, d(\xi)]],
\]
and
\[
\phi(\alpha(\omega), [\zeta, \xi])  = [\alpha(\omega), d([\zeta, \xi])], \quad \forall \, \zeta, \xi, \omega, \tau, \pi \in H.
\]
Then from Equation (\ref{''}), we obtain
\[
0 =  \Big[\big[\alpha(\tau), \alpha(\pi)\big], \big[\alpha(\omega), d([\zeta, \xi])\big] - \big[\alpha(\omega), [\zeta, d(\xi)]\big]\Big].
\]
Thus, we conclude
\[
\Big[[\alpha(\tau), \alpha(\pi)], \big[\alpha(\omega), d([\zeta, \xi]) - [\alpha(\zeta), d(\xi)]\big] \Big] = 0, \quad \forall \, \zeta, \xi, \omega, \tau, \pi \in H.
\]
\end{proof}
\begin{thm}
 Suppose $H$ is a Hom Lie superalgebra with $H = H'$ and $Z_H(H') = \{0\}$. If $d: H \to H$ is a linear super-commuting map, then $d \in C_{\bar{0}}(H)$.
 \end{thm}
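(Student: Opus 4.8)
The plan is to verify directly that $d$ satisfies the defining conditions of $C_{\bar 0}(H)$. Since a linear super-commuting map is even, we have $|d|=\bar 0$, and by definition $d\alpha=\alpha d$ already holds. Hence the only thing left to establish is the centroid identity, which for an even map (the sign $(-1)^{|d||\zeta|}$ being trivial) reads
\[
d([\zeta,\xi]) = [\alpha(\zeta), d(\xi)], \qquad \forall\, \zeta, \xi \in H .
\]
Because $H = H'$, every element of $H$ is a sum of brackets, so checking this identity on brackets is enough to place $d$ in the centroid.

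To prove the displayed identity, I would fix homogeneous $\zeta, \xi \in H$ and set
\[
u = d([\zeta,\xi]) - [\alpha(\zeta), d(\xi)].
\]
The starting point is Lemma \ref{Lemma 4.1.}, which (recall $|\phi|=|d|=\bar 0$) gives
\[
\big[[\alpha(\tau), \alpha(\pi)], [\alpha(\omega), u]\big] = 0, \qquad \forall\, \omega, \tau, \pi \in H.
\]
The idea is then to exploit that $\alpha$ is an automorphism twice over. First, since $\alpha$ is bijective and $H=H'$, the elements $[\alpha(\tau),\alpha(\pi)]=\alpha([\tau,\pi])$ range over all of $[H,H]=H'=H$ as $\tau,\pi$ vary. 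Fixing $\omega$ and writing $w=[\alpha(\omega),u]$, the identity above says $[x,w]=0$ for every $x\in H'$, that is, $w\in Z_H(H')$. Since $Z_H(H')=\{0\}$, this forces $[\alpha(\omega),u]=0$ for all $\omega$.

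For the second reduction, using surjectivity of $\alpha$ again, the relation $[\alpha(\omega),u]=0$ for all $\omega$ is the same as $[y,u]=0$ for all $y\in H$, i.e.\ $u\in Z_H(H)$. As $H=H'$ we have $Z_H(H)=Z_H(H')=\{0\}$, so $u=0$, which is precisely $d([\zeta,\xi])=[\alpha(\zeta),d(\xi)]$. Combined with $|d|=\bar 0$ and $d\alpha=\alpha d$, this yields $d\in C_{\bar 0}(H)$. I expect the only real subtlety to lie in the bookkeeping of these two spanning/centralizer steps: one must be certain that the brackets $[\alpha(\tau),\alpha(\pi)]$ genuinely exhaust $H'$ (this is where the automorphism hypothesis on $\alpha$ together with $H=H'$ enters) and that $Z_H(H')=\{0\}$ may be invoked at both stages, the second time via the identification $Z_H(H)=Z_H(H')$ under $H=H'$. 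No computation beyond Lemma \ref{Lemma 4.1.} should be required.
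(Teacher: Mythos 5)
Your proposal is correct and follows essentially the same route as the paper's own proof: invoke Lemma \ref{Lemma 4.1.}, use $Z_H(H')=\{0\}$ once to kill $[\alpha(\omega),u]$ and then again (via $Z_H(H)=Z_H(H')$ under $H=H'$) to kill $u$ itself. If anything, you are more explicit than the paper about why the brackets $[\alpha(\tau),\alpha(\pi)]$ exhaust $H'$ and why the two centralizer steps are legitimate, which is a welcome clarification rather than a deviation.
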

\begin{proof}
By Lemma (\ref{Lemma 4.1.}), we have
\[
\Big[\big[\alpha(\tau), \alpha(\pi)\big], \big[\alpha(\omega), d([\zeta, \xi]) - [\alpha(\zeta), d(\xi)]\big]\Big] = 0, \quad \forall \zeta, \xi, \pi, \omega, \tau \in H.
\]
Then
\[
\Big[\alpha(\omega), d([\zeta, \xi]) - [\alpha(\zeta), d(\xi)]\Big] \in Z_H(H').
\]
Since \(Z_H(H') = \{0\}\), it follows that
\[
\big[\alpha(\omega), d([\zeta, \xi]) - [\alpha(\zeta), d(\xi)]\big] = 0.
\]
Moreover, we obtain
\[
d([\zeta, \xi]) - [\alpha(\zeta), d(\xi)] \in Z_H(H) = Z_H(H') = \{0\},
\]
that is,
\[
d([\zeta, \xi]) = [\alpha(\zeta), d(\xi)].
\]
Thus \(d \in C_{\bar{0}}(H)\). 
 
\end{proof}

\end{document}